\theoremstyle{theorem}
\newtheorem{notation}[theorem]{Notation}
\newcommand*\patchAmsMathEnvironmentForLineno[1]{%
  \expandafter\let\csname old#1\expandafter\endcsname\csname #1\endcsname
  \expandafter\let\csname oldend#1\expandafter\endcsname\csname end#1\endcsname
  \renewenvironment{#1}%
     {\linenomath\csname old#1\endcsname}%
     {\csname oldend#1\endcsname\endlinenomath}}%
\newcommand*\patchBothAmsMathEnvironmentsForLineno[1]{%
  \patchAmsMathEnvironmentForLineno{#1}%
  \patchAmsMathEnvironmentForLineno{#1*}}%
\numberwithin{theorem}{section}
\newcommand{\CS}{\operatorname{CS}}
\newcommand{\CSk}{\operatorname{CS}^k}
\newcommand{\CSw}{\operatorname{CS}^w}
\newcommand{\GM}{\operatorname{GM}}
\newcommand{\ssfy}{\operatorname{ss}}
\newcommand{\bfloor}[1]{\left\lfloor#1\right\rfloor}
\newcommand{\customlabel}[2]{%
   \protected@write \@auxout {}{\string \newlabel {#1}{{#2}{\thepage}{#2}{#1}{}} }%
   \hypertarget{#1}{}
}
\numberwithin{equation}{section}
\title[Upper bounds for constant slope $p$-adic families]{Upper bounds for constant slope $p$-adic families of modular forms}
\author{John Bergdall}
\date {\today}
\address{John Bergdall\\Department of Mathematics\\Bryn Mawr College\\ Bryn Mawr, PA 19010\\USA}
\email{jbergdall@brynmawr.edu}
\subjclass[2000]{11F33 (11F80, 11F85)}
\begin{document}
\begin{abstract}
We study $p$-adic families of eigenforms for which the $p$-th Hecke eigenvalue $a_p$ has constant $p$-adic valuation (``constant slope families''). We prove two separate upper bounds for the size of such families. The first is in terms of the logarithmic derivative of $a_p$ while the second depends only on the slope of the family. We also investigate the numerical relationship between our results and the former Gouv\^ea--Mazur conjecture.
\end{abstract}
%

\maketitle

The purpose of this article is to prove new results, the first of their kind, on the sizes of $p$-adic families of modular forms. Here, we describe our results in the case of tame level 1. See the text for full generality and precise definitions and theorems.

Suppose that $k\geq 2$ and $f = \sum a_n(f) q^n$ is a normalized Hecke eigenform of weight $k$ and level $\Gamma_0(p)$ where $p$ is a fixed prime. If $f$ is of non-critical slope, meaning the $p$-adic valuation $v_p(a_p(f))$ of $a_p(f)$ is less than $k-1$, then work of Hida (\cite{Hida-GaloisRepresentationsIntoLambda}) and Coleman (\cite{Coleman-pAdicBanachSpaces}) implies that there is a formal $q$-expansion
\begin{equation*}
\mathbf f = \sum a_n(\kappa)q^n
\end{equation*}
where the $a_n(\kappa)$ are rigid analytic functions in $\kappa$, at $\kappa = k$ the $q$-expansion $\mathbf f_k := \sum a_n(k)q^{n}$ is the same as $f$, and for $\kappa = k'$ a sufficiently large integer congruent to $k$ modulo $p-1$, $\mathbf f_{k'}$ is also the $q$-expansion of a normalized eigenform of weight $k'$ and level $\Gamma_0(p)$. The common domain of the functions $a_n(\kappa)$ is implicit in $\mathbf f$; it is an open affinoid subdomain $U$, containing $k$, inside the $p$-adic weight space. 

In general, we refer to $\mathbf f$ as a $p$-adic family ``passing through $f$.'' A family has constant slope if $\kappa \mapsto v_p(a_p(\kappa))$ is constant over the domain. This can always be arranged by shrinking the domain, so for a given $f$ there is a smallest integer $\CS(f) \geq 0$ such that a constant slope family passing through $f$ exists on a domain containing every integer $k' \congruent k \bmod (p-1)p^{\CS(f)}$. We call $\CS(f)$ the ``constant slope (valuation) radius'' of $f$. Bounding $\CS(f)$ from below is the same as bounding the size of $\mathbf f$ from above. Our first result is such a bound. We prove that
\begin{equation}\label{eqn:apprime-bound-intro}
v_p\left({a_p(f) \over a_p'(k)}\right) \leq \CS(f)
\end{equation}
where $a_p'(k) = {\mder a_p/\kappa}|_{\kappa=k}$ is the derivative of $a_p$ over a $p$-adic family containing $f$ (see Theorem \ref{theorem:L-invariant-in-text}). The proof of \eqref{eqn:apprime-bound-intro} is classical $p$-adic analysis after precise definitions are made, and the argument applies equally to  non-classical $p$-adic eigenforms.  

A special case of \eqref{eqn:apprime-bound-intro}  is realized by applying a famous theorem of Greenberg and Stevens (and its generalizations) to rewrite \eqref{eqn:apprime-bound-intro} as
\begin{equation}\label{eqn:Linv-bound-intro}
-v_p\left(\mathscr L_f\right) + v_p(2) \leq \CS(f)
\end{equation}
where $\mathscr L_f \in \bar \Q_p$ is the $\mathscr L$-invariant of $f$ if, say, $f$ is a newform. This allows one, for instance, to rule out the existence of very large $p$-adic families in many cases. On the other hand, the bound \eqref{eqn:apprime-bound-intro} also visually takes the form
\begin{equation}\label{eqn:gm-error-statement}
h - \text{``error''} \leq \CS(f)
\end{equation}
where $h = v_p(a_p(f))$ is the slope of $f$. This matches the numerical calculations that led to the Gouv\^ea--Mazur conjecture (\cite{GouveaMazur-FamiliesEigenforms}), so it is interesting to study how tight \eqref{eqn:apprime-bound-intro} is. It cannot be tight in general because, for instance, there exists newforms whose $\mathscr L$-invariants have positive valuation (in which case ``error'' is quite large!).

One feature of \eqref{eqn:apprime-bound-intro} is that it depends on $f$ and not just the slope $h$. This is notable because there is an implicit understanding among experts that the magnitude of $h$ is a direct obstruction to the existence of a family with constant slope $h$. Roughly, larger $h$'s should correspond to smaller families.  Motivated by this, we give a second bound  of the form
\begin{equation}\label{eqn:log-bound-intro}
\floor{m_p(h)} \leq \CS(f)
\end{equation}
where $m_p(h)$ is a non-negative function that grows like $\log h$ as $h \goto \infty$ (see Theorem \ref{thm:slope-dependent}). Actually, we will only produce a non-trivial bound if $p > 3$.

Unlike \eqref{eqn:apprime-bound-intro}, it is crucial that $f$ be in a family of eigenforms of level $\Gamma_0(p)$ for \eqref{eqn:log-bound-intro} to hold; there is no bound like \eqref{eqn:log-bound-intro} if we allow all $p$-adic eigenforms of slope $h$ at once because of ``spectral halos.''  To illustrate this, let us sketch an argument that is not complete. Gouv\^ea has conjectured  (\cite{Gouvea-WhereSlopesAre}) there is a $0\%$ chance that the slope of a weight $k$  eigenform on $\Gamma_0(p)$ lies strictly between ${k-1\over p+1}$ and ${k-2\over 2}$. Suppose that not only is Gouv\^ea's conjecture true but that it also can be strengthened to say that absolutely no slopes appear in that range. For fixed $h$ and $k$, among the $k'\geq 2$ satisfying $k' \congruent k \bmod p-1$ and ${k'-1\over p+1} < h < {k'-2\over 2}$ there is at least one with $v_p(k'-k)$ roughly $\log h$ (or higher), and our assumption implies that no constant slope family of slope $h$ passes over such $k'$. The argument fails since, of course, Gouv\^ea's conjecture is completely open, but also since the strengthening we considered is likely true only for primes Buzzard has called $\SL_2(\Z)$-regular (\cite{Buzzard-SlopeQuestions}).

The previous argument assumes certain slopes do not appear at all in certain weights. The key point in salvaging it is to use that a $p$-adic family of eigenforms comes equipped with a $p$-adic family of Galois representations and the ``mod $p$ reduction'' is constant over the family. The substitute for Gouv\^ea's global conjecture is a purely local version (Theorem \ref{thm:galois-rep-thm} in the text) that rules out lifts of the mod $p$ reduction of a fixed crystalline representation to crystalline representations, in other weights, with prescribed slope. This result, which we do not describe further but consider it of independent interest, is  an application of a theorem proven by Berger, Li, and Zhu (\cite{BergerLiZhu-SmallSlopes}). Their result is valid for any $p$, but unfortunately the way we apply it gives information only if $p > 3$.

In summary, if $f$ has slope $h$ then we have (for $p>3$) proven a bound
\begin{equation}\label{eqn:summary-bound}
\max\left(\floor{m_p(h)}, -v_p(a_p'(k)/a_p(f)\right) \leq \CS(f)
\end{equation}
for some explicit function $m_p(h)$. At the end of this article, we will give some numerical evidence that {\em this} bound is near to being tight, and we will discuss the relationship between \eqref{eqn:summary-bound} and the Gouv\^ea--Mazur conjecture.

We end by noting that automorphic and Galois-theoretic methods playing dual roles is typical for studies of deformation questions within the ``Langlands program.'' In the situation at hand, Wan (\cite{Wan-GouveaMazur}) used ($p$-adic) automorphic methods to produce upper bounds for quantities resembling $\CS(f)$ (thus lower bounds on the sizes of $p$-adic families) almost twenty years ago. As far as we know, those results have not been improved upon. By contrast, we prove bounds in the opposite direction and our bounds are either proven by Galois-theoretic considerations indicated above or may be interpreted as Galois-theoretic quantities. It is our hope that this line of inquiry opens new perspectives on the  problem originally raised by Gouv\^ea and Mazur. For further discussion about Galois representations and questions on slopes of modular forms, see \cite{BuzzardGee-Slopes}.

\subsection*{Organization}
This article comprises multiple short sections. Sections \ref{sec:defn-1st-theorem} through \ref{sec:log-derive-bound} concern the the first bound above and they also serve to set the notation and clarify the hypotheses. In Section \ref{sec:Linvariants} we discuss $\mathscr L$-invariants, whereas Section \ref{section:emerton} contains a non-trivial example where \eqref{eqn:apprime-bound-intro} is an equality. Sections  \ref{section:crys-lifts} through \ref{sec:optimization} are concerned with the second bound. In Section \ref{section:gouvea-mazur-comparison}, we examine the relationship between our results and the Gouv\^ea--Mazur conjecture.

\subsection*{Notations}
The letter $p$ always means a fixed prime number. We also need a second integer $N \geq 1$ that is assumed to be co-prime to $p$. 

We write $\bar \Q_p$ for an algebraic closure of $\Q_p$, $\C_p$ for its completion, and we normalize the $p$-adic valuation on $\C_p$ so that $v_p(p) = 1$. Throughout the paper we measure everything according to valuations, rather than norms. 

If $\bar \Q$ denotes the algebraic numbers in $\C$, then we also fix an embedding $\bar \Q \subset \bar \Q_p$, allowing us to compute $v_p(\alpha)$ where $\alpha$ is an algebraic integer, construct $\bar \Q_p$-linear Galois representations associated to eigenforms, etc.\

Throughout the paper, rigid analytic spaces are taken in the language of Tate's rigid analytic geometry, as opposed to the theories developed by Berkovich or Huber.

\subsection*{Acknowledgements}
The research reported on here was partially supported by NSF award DMS-1402005. The author thanks James Newton, Robert Pollack, Sandra Rozensztajn, and an anonymous referee for helpful discussions and comments. There are some computer calculations given below that were made by Pollack; we also thank him for access to this data.  In addition, during the elaboration of this work we also benefited from short visits to, and the hospitality at, Imperial College (London), the Institut des Hautes \'Etudes Scientifiques (Bures-sur-Yvette), and the Max-Planck-Institut f\"ur Mathematik (Bonn). The staff and members of these institutions are duly thanked.

\section{A consequence of the maximum modulus principle}\label{sec:defn-1st-theorem}

Suppose that $m \geq 0$ is a rational number. Consider the one-dimensional Tate algebra over $\C_p$ given by
\begin{equation*}
\C_p\langle wp^{-m} \rangle = \set{\sum_{i\geq 0} r_i w^i \st r_i \in \C_p \text{ and } v_p(r_i) + mi \goto +\infty  \text{ as $i\goto +\infty$}}.
\end{equation*}
This is a $\C_p$-Banach algebra with the Gauss valuation
\begin{equation}\label{eqn:gauss-norm}
v^{(m)}(F) = \inf\set{v_p(r_i) + mi \st i \geq 0}.
\end{equation}
For $w_0 \in \C_p$ we write $\B(w_0,m)$ for the affinoid space defined by $v_p(w-w_0) \geq m$. Then, there is a canonical isomorphism between $\C_p\langle wp^{-m}\rangle$ and the ring $\mathcal O(\B(w_0,m))$ of rigid analytic functions on $\B(w_0,m)$. We write $\B^{\circ}$ for the open $p$-adic unit disc
\begin{equation*}
\B^{\circ} = \set{w \st v_p(w) > 0} = \bigunion_{m > 0} \B(0,m).
\end{equation*}
\begin{definition}\label{defn:radius-ball}
Suppose that $W \subset \B^{\circ}$ is an affinoid open subdomain and $w_0 \in W$. The radius of $W$ at $w_0$ is 
\begin{equation*}
m_{w_0}(W) = \inf\set{m > 0 \st \B(w_0,m) \subset W}.
\end{equation*}
\end{definition}
Our only result here is an application of the maximum modulus principle.
\begin{proposition}\label{prop:constant-slope-approximation}
Let $W \subset \B^{\circ}$ be an affinoid open subdomain and $F \in \mathcal O(W)$ be non-zero and such that $w \mapsto v_p(F(w))$ is constant on $W$. Then, for any $w_0 \in W$,  we have
\begin{equation*}
v_p(F(w_0)) - v_p(F'(w_0)) \leq m_{w_0}(W).
\end{equation*}
\end{proposition}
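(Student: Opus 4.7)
The idea is to compare the pointwise valuation of $F$ on a closed disc around $w_0$ to the Gauss valuation of its Taylor expansion there, and to extract information about $F'(w_0)$ from the constant-valuation hypothesis.

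First, I would fix an $m > 0$ with $\B(w_0,m) \subset W$ and restrict $F$ to this closed disc. Since $\mathcal{O}(\B(w_0,m)) \cong \C_p\langle (w-w_0)p^{-m}\rangle$, the function $F$ admits a convergent Taylor expansion $F(w) = \sum_{i \geq 0} c_i (w - w_0)^i$ with $c_0 = F(w_0)$ and $c_1 = F'(w_0)$, and the Gauss valuation of this expansion is $v^{(m)}(F) = \inf_{i \geq 0}\{v_p(c_i) + mi\}$.

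The key step is to apply the maximum modulus principle in valuation form: the minimum of $v_p(F(w))$ over $w \in \B(w_0,m)$ equals $v^{(m)}(F)$. Since $v_p(F(w))$ is constant on $W \supset \B(w_0,m)$, this minimum is attained and equal to $v_p(F(w_0)) = v_p(c_0)$. Hence
\begin{equation*}
v_p(c_0) = \inf_{i\geq 0}\{v_p(c_i) + mi\},
\end{equation*}
so in particular $v_p(c_0) \leq v_p(c_1) + m$, i.e.\ $v_p(F(w_0)) - v_p(F'(w_0)) \leq m$. Taking the infimum over all admissible $m$ then yields $v_p(F(w_0)) - v_p(F'(w_0)) \leq m_{w_0}(W)$, as desired.

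I do not anticipate any serious obstacle: the statement is essentially the observation that a nonvanishing $F$ with Gauss valuation realized at its constant term must have all higher Taylor coefficients no smaller than $c_0$ in Gauss valuation. The only mildly delicate point is handling degenerate cases: if $F' (w_0) = 0$ the inequality is automatic ($v_p(F'(w_0)) = +\infty$), and if $F \equiv 0$ the hypothesis is interpreted as $v_p(F) \equiv +\infty$ so the statement is vacuous. Otherwise, the argument is a direct unwinding of the definition of the Gauss valuation together with the maximum modulus principle already implicit in the Tate algebra formalism fixed in the section.
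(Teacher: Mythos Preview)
Your proof is correct and follows essentially the same route as the paper's: both restrict to a closed disc $\B(w_0,m)\subset W$, invoke the maximum modulus principle to identify the constant value $v_p(F(w_0))$ with the Gauss valuation, and then read off the bound on $F'(w_0)$ from the $i=1$ term. The only cosmetic difference is that the paper first translates to $w_0=0$ and phrases the reduction as ``it suffices to take $W=\B(0,m)$,'' whereas you work directly at $w_0$ and pass to the infimum over $m$ explicitly at the end.
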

\begin{proof}
It is sufficient to show the result for $W = \B(0,m)$ and $w_0 = 0$ (so $m_{w_0}(W) = m$). Consider any $F = \sum r_iw^i  \in  \C_p\langle w p^{-m} \rangle$. The maximum modulus principle (\cite[Proposition 5.1.4/3]{BGR}) implies that
\begin{equation}\label{eqn:sup-norm}
\inf_{w \in \B(0,m)} v_p(F(w)) = v^{(m)}(F).
\end{equation}
Since $v_p(F(w))$ is constant on $\B(0,m)$, the left-hand side of \eqref{eqn:sup-norm} is equal to $v_p(F(0))$. On the other hand, \eqref{eqn:gauss-norm} gives
\begin{equation*}
v^{(m)}(F) \leq v_p(r_1) + m = v_p(F'(0)) + m.
\end{equation*}
Rearranging the inequalities, the result follows.
\end{proof}

\begin{remark}\label{rmk:constant-slope-root-test}
In the situation of the proposition, let $M$ be the infimum over all $m$ such that $F$ has constant slope on $\mathbf B(w_0,m)$. Then, in fact, 
$$
M = \inf_{i\geq 1}{1\over i}\left(v_p(F(w_0))-v_p(r_i)\right),
$$
which amusingly appears to be a ``root test'' for the (valuation) radius of the largest disc around $w_0$ on which $v_p(F(w))$ is constant. The inequality $M \geq (\dotsb)$ follows from the same proof as Proposition \ref{prop:constant-slope-approximation}. We omit the (straightforward) proof of the reverse inequality.
\end{remark}

\section{$p$-adic eigenforms}\label{sec:padic-eigenforms}
Write $\mathcal W$ for the $p$-adic weight space over $\C_p$. Thus a $\C_p$-point of $\mathcal W$ is a continuous character $\kappa: \Z_p^\times \goto \C_p^\times$. Denote by $\Delta \subset \Z_p^\times$ the multiplicative torsion subgroup and note that $\Delta \times 1+2p\Z_p \simeq \Z_p^\times$ and $1+2p\Z_p$ is pro-cyclic. Given a character $\epsilon$ of $\Delta$, we say that $\kappa \in \mathcal W$ is of type $\epsilon$ if $\kappa|_{\Delta} = \epsilon$. In this way, we can write $\mathcal W$ as a union 
\begin{equation*}
\mathcal W = \bigunion_{\epsilon} \mathcal W_{\epsilon}
\end{equation*}
of connected components $\mathcal W_{\epsilon}$ consisting exactly of those $\kappa$ of type $\epsilon$. If the type of $\kappa$ is fixed, then $\kappa$ is completely determined by its value $\kappa(\gamma)$ for some/any choice of generator $\gamma$ for $1+2p\Z_p$. We fix such a choice and then for $\kappa \in \mathcal W$ we define $w_\kappa = \kappa(\gamma)-1$. The association $\kappa \mapsto w_\kappa$ provides a coordinate chart $\mathcal W_{\epsilon} \simeq \B^{\circ}$, for any $\epsilon$, that depends on $\gamma$ only up to isometry. In particular, for each $\kappa \in \mathcal W$ and rational number $m > 0$ we have a well-defined open affinoid subdomain $\B(\kappa,m) := \B(w_\kappa,m) \subset \mathcal W$.

\begin{remark}\label{rmk:variable-change}
If $k \in \Z$, then it defines an element $k \in \mathcal W$ given by the character $z \mapsto z^k$. A direct computation shows that $v_p(w_k-w_{k'}) = 1 + v_p(2) + v_p(k-k')$.
\end{remark}

Now we turn towards modular forms. Assume for the rest of this article that $N\geq 1$ is an integer and $p \ndvd N$. We write $\Gamma_1(N)$, $\Gamma_0(p)$, and $\Gamma=\Gamma_1(N)\cap \Gamma_0(p)$ for the standard congruence subgroups of $\SL_2(\Z)$. For $k\geq 2$ we denote by $M_k(\Gamma_1(N))$ and $M_k(\Gamma)$ the space of weight $k$ modular forms of levels $\Gamma_1(N)$ and $\Gamma$. We will also use the notations $S_k(\Gamma_1(N))$ and $S_k(\Gamma)$ for the corresponding spaces of cuspforms.

We will use bold $\mathbf T$'s to stand for certain Hecke algebras. Specifically, $\mathbf T_N$ is the commutative $\mathbf Z$-algebra generated by symbols $T_\ell$, for $\ell \ndvd N$ a prime, and $\langle d \rangle$, for $d \in (\Z/N\Z)^\times$. Then, $\mathbf T_N$ acts by endomorphisms on $M_k(\Gamma_1(N))$ through the standard Hecke operators with the same notation. We write $\mathbf T_\Gamma$ for the same $\Z$-algebra except the symbol $T_p$ is replaced by $U_p$ and then it acts by endomorphisms on $M_k(\Gamma)$. As a convention, we shorten the phrase ``$f \in M_k(\Gamma)$ is a normalized eigenform for $\mathbf T_{\Gamma}$'' to ``$f \in M_k(\Gamma)$ is an eigenform'', or some variation on that (and similarly for $M_k(\Gamma_1(N))$).

If $g$ is an eigenform we write $a_\ell(g)$ for its $\ell$-th Hecke eigenvalue and $\psi_g$ for its nebentype character. When $g$ is further an element of $M_k(\Gamma_1(N))$, its $p$-th Hecke polynomial is defined to be $X^2 - a_p(g) X + \psi_g(p)p^{k-1}$. We may factor this polynomial as $(X-\alpha)(X-\beta)$, and the algebraic integers $\alpha$ and $\beta$ are called ($p$-)refinements of $g$. Given a refinement, say $\alpha$, we define
\begin{equation*}
g_\alpha(z) = g(z) - \beta g(pz).
\end{equation*}
The modular form $f = g_\alpha$ (which is called a ($p$-)stabilization) is then an eigenform in $M_k(\Gamma)$ whose Hecke eigenvalues under $T_\ell$ ($\ell \neq p$) and $\langle d \rangle$ are the same as $g$'s, but whose $U_p$-eigenvalue is $\alpha$. Because $N$ is prime to $p$, every eigenform in $M_k(\Gamma)$ is either a stabilization or new at level $p$ (a ``$p$-new'' eigenform).

\begin{definition}\label{defn:refined-eigenform-defns}
Let $f \in M_k(\Gamma)$ be an eigenform.
\begin{enumerate}
\item $f$ is called regular if either $f$ is $p$-new or $f = g_\alpha$ for some eigenform $g\in M_k(\Gamma_1(N))$ whose $p$-th Hecke polynomial has distinct roots.
\item The slope of $f$ is $v_p(a_p(f))$. The slope is non-critical (or $f$ has non-critical slope) if $v_p(a_p(f)) < k-1$.
\end{enumerate}
\end{definition}

\begin{remark}\label{rmk:regularity}
The regularity condition always holds if $k=2$ and follows in general from a conjecture of Tate (see \cite{ColemanEdixhoven-Semisimplicity}). It is also vacuous if $N = 1$ (\cite[Theorem 1]{Gouvea-WhereSlopesAre}).
\end{remark}

\begin{remark}\label{rmk:old-slope}
We will also sometimes refer to the slope $v_p(a_p(g))$ of an eigenform $g \in M_k(\Gamma_1(N))$. If $v_p(a_p(g)) < {k-1\over 2}$ then the two stabilizations $g_\alpha$ and $g_\beta$ are distinct and give regular eigenforms at level $\Gamma$; their slopes are given (in some order) by $v_p(a_p(g))$ and $k-1 - v_p(a_p(g))$.
\end{remark}

For each $p$-adic weight $\kappa$ we now write $M_{\kappa}^{\dagger}(\Gamma)$ for Coleman's space of overconvergent $p$-adic modular forms of tame level $\Gamma_1(N)$ and weight $\kappa$ (\cite{Coleman-pAdicBanachSpaces}). This space still has an action of $\mathbf T_{\Gamma}$ by endomorphisms and the subspace $S_{\kappa}^{\dagger}(\Gamma)$ of overconvergent $p$-adic cuspforms is Hecke stable. 

The eigencurve $\mathcal C_N$ of tame level $N$ (see \cite{ColemanMazur-Eigencurve,Buzzard-Eigenvarieties}) is the one-dimensional rigid analytic space over $\Q_p$ that parameterizes the $\T_{\Gamma}$-eigensystems appearing in the spaces $M_{\kappa}^{\dagger}(\Gamma)$ and that are non-vanishing at $U_p$ (finite slope eigensystems). Any eigenform $f \in M_k(\Gamma)$ naturally defines a finite slope eigenform in $M_{k}^{\dagger}(\Gamma)$ and thus a canonical point of $\mathcal C_N$. Given a finite slope eigenform $f \in M_{\kappa}^{\dagger}(\Gamma)$ we write $x_f$ for the corresponding point on $\mathcal C_N$. The map that sends $x_f \mapsto \kappa$ defines a canonical map $\kappa: \mathcal C_N \goto \mathcal W$ called the weight map. We also have natural morphisms
\begin{align*}
a_\ell &: \mathcal C_{N} \rightarrow \mathbf A^1 & \text{($\ell \ndvd N$ prime)}\\
\langle d \rangle &: \mathcal C_{N} \rightarrow \mathbf A^1 & \text{($\gcd(d,N) = 1$)}
\end{align*}
that record the Hecke eigensystem at a given point ($\mathbf A^1$ is the affine line). The function $a_p$ is non-vanishing on $\mathcal C_N$.

\begin{definition}
A classical point on $\mathcal C_N$ is a point of the form $x = x_f$ for some eigenform $f \in M_k(\Gamma)$.
\end{definition}
Classical points are ubiquitous on $\mathcal C_N$. For instance, points of integer weight have neighborhood bases in which the classical points are Zariski-dense.

We end this section by defining a certain hypothesis under which our results are most naturally stated. It is verified for many classical points on the eigencurve.
\begin{hypothesis*}[\'et]\customlabel{hyp:et}{\'et}
If $f \in M_{\kappa}^{\dagger}(\Gamma)$ is a finite slope eigenform, then it satisfies \eqref{hyp:et} if the weight map $\kappa: \mathcal C_{N} \goto \mathcal W$ is \'etale at $x_f$.
\end{hypothesis*}
We will often write ``$f \in M_\kappa^{\dagger}(\Gamma)$ satisfies \eqref{hyp:et},'' implicitly including the qualification that $f$ is finite slope eigenform.

\begin{proposition}\label{prop:p-refined-etale}
If $f\in M_k(\Gamma)$ is an regular eigenform of non-critical slope, then the weight map $\kappa$ is \'etale at $x_f$. Thus $f$ satisfies \eqref{hyp:et}.
\end{proposition}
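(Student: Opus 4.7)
The plan is to analyze $\mathcal C_N$ locally near $x_f$ via Coleman's slope decomposition and reduce the étaleness claim to a multiplicity-one statement about $\mathbf T_\Gamma$-eigensystems in the classical space $M_k(\Gamma)$. Set $h = v_p(a_p(f))$, so $h < k-1$ by non-critical slope. Choose a small open affinoid $U \subset \mathcal W$ containing $w_k$ inside the component $\mathcal W_\epsilon$. For $U$ small, Coleman's construction produces a finite projective $\mathcal O(U)$-module $M$ representing the slope-$\leq h$ part of the family of overconvergent modular forms of tame level $\Gamma_1(N)$ over $U$, on which $\mathbf T_\Gamma$ acts $\mathcal O(U)$-linearly. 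Let $T_U \subset \operatorname{End}_{\mathcal O(U)}(M)$ be the image of $\mathbf T_\Gamma$; the portion of $\mathcal C_N$ over $U$ of slope $\leq h$ is identified with $\operatorname{Sp}(T_U)$, finite over $U$, with the weight map identified with the structure morphism. Since slope-$\leq h$ is open on $\mathcal C_N$, after shrinking $U$ this captures a neighbourhood of $x_f$.

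After localising at $x_f$, the weight map becomes a finite local homomorphism $\mathcal O_{\mathcal W, w_k} \to (T_U)_{x_f}$. The source is a DVR, everything is in characteristic zero, and residue fields are $\C_p$, so étaleness at $x_f$ is equivalent to the fibre $(T_U)_{x_f} \otimes_{\mathcal O_{\mathcal W, w_k}} \C_p$ having dimension one over $\C_p$. Coleman's classicality theorem, applied under $h < k-1$, canonically identifies $M \otimes_{\mathcal O(U)} \C_p$ with the classical space $M_k(\Gamma)^{\leq h}$. Hence $(T_U)_{x_f} \otimes \C_p$ is the image of $\mathbf T_\Gamma$ acting on the generalised $x_f$-eigenspace inside $M_k(\Gamma)^{\leq h}$, and the matter reduces to showing that this generalised eigenspace is one-dimensional with $\mathbf T_\Gamma$ acting by the scalars prescribed by $f$.

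This is precisely what the regularity hypothesis delivers. If $f$ is $p$-new, multiplicity one for newforms of level $\Gamma$ immediately produces a one-dimensional $\mathbf T_\Gamma$-eigenspace in the $p$-new subspace of $M_k(\Gamma)$, on which $\mathbf T_\Gamma$ acts through the eigensystem of $f$. Otherwise $f = g_\alpha$ for some eigenform $g \in M_k(\Gamma_1(N))$ whose $p$-th Hecke polynomial $(X-\alpha)(X-\beta)$ has distinct roots; then the partner stabilisation $g_\beta$ has a different $U_p$-eigenvalue, so $f$ is the unique normalised $\mathbf T_\Gamma$-eigenform in $M_k(\Gamma)$ with its full eigensystem, and distinctness of $\alpha,\beta$ forces $U_p$ to act diagonalisably on the relevant span of $g_\alpha, g_\beta$ inside the $p$-old subspace. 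In either case the generalised eigenspace coincides with the one-dimensional actual eigenspace, and étaleness follows.

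The hard part is really the setup: identifying a neighbourhood of $x_f$ on $\mathcal C_N$ with $\operatorname{Sp}((T_U)_{x_f})$ in the rigid category, and invoking the version of Coleman's classicality that works over a positive-radius disc in weight space rather than only at an integer weight. Both are standard consequences of the construction of the eigencurve in \cite{ColemanMazur-Eigencurve,Buzzard-Eigenvarieties} once non-critical slope is in hand, and the étaleness conclusion is then formal local commutative algebra.
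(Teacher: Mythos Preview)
Your argument is correct when $f$ is (the $p$-stabilisation of) a newform of full tame level $N$, but there is a genuine gap when $f$ arises from a strictly smaller tame level $N'\mid N$. The Hecke algebra $\mathbf T_\Gamma$ in play is the ``anemic'' one, generated only by $T_\ell$ for $\ell\nmid N$, $U_p$, and diamond operators; it does not see the primes dividing $N$. If $f$ comes from level $N'<N$, the degeneracy maps produce several oldforms in $M_k(\Gamma)$ sharing the same $\mathbf T_\Gamma$-eigensystem, so the generalised eigenspace you claim to be one-dimensional is not. Your appeal to ``multiplicity one for newforms of level $\Gamma$'' in the $p$-new case, and to uniqueness of $g_\alpha$ in the $p$-old case, both silently assume $f$ is new at every prime dividing $N$; regularity in the sense of Definition~\ref{defn:refined-eigenform-defns} only constrains the behaviour at $p$. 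Moreover, the identification of $(T_U)_{x_f}\otimes\C_p$ with the \emph{image} of $\mathbf T_\Gamma$ on that eigenspace is not justified either: one must separately exclude the possibility that a family of eigensystems new of level $N$ at nearby weights degenerates to the old eigensystem of $f$ at weight $k$, which would make the fibre of $T_U$ strictly larger than that image.

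This is exactly the point the paper addresses. It factors through the eigencurve $\mathcal C_{N'}^N$ built from tame level $N'$: there $f'$ is genuinely new (or the stabilisation of a newform), strong multiplicity one gives a one-dimensional generalised eigenspace in $M_k(\Gamma')$, and your argument (essentially Bella\"iche's lemmas) shows $\mathcal C_{N'}^N\to\mathcal W$ is \'etale at $x_{f'}$. The remaining step---that the closed immersion $\mathcal C_{N'}^N\hookrightarrow\mathcal C_N$ is a local isomorphism near $x_f$---is established by a Galois-theoretic argument: the tame conductor of the associated Galois representation is constant at classical points along an irreducible component (via \cite{Saha-Conductors}), forcing all nearby classical eigensystems on $\mathcal C_N$ to come from level $N'$. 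That input is missing from your proposal and cannot be supplied by the slope-decomposition and classicality machinery alone.
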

\begin{proof}
This is essentially given by the two lemmas in \cite[Section 2.1.4]{Bellaiche-CriticalpadicLfunctions}, except for the restriction on the level at which $f$ is new. This takes a small amount of work to remove, so we explain the argument in full.

First, if $N' \dvd N$ let us write $\mathcal C_{N'}^{N}$ for the eigencurve parameterizing finite slope $\mathbf T_\Gamma$-eigensystems appearing in spaces $M_\kappa^{\dagger}(\Gamma')$ where $\Gamma' = \Gamma_1(N') \cap \Gamma_0(p)$. As observed at the start of \cite[Lemma 2.7]{Bellaiche-CriticalpadicLfunctions}, there is a canonical closed immersion $\mathcal C_{N'}^N \inject \mathcal C_N$. 

Let $f$ be as in our proposition. Then, we can find a unique $N' \dvd N$ and an eigenform $f' \in M_k(\Gamma')$, which is either new or the stabilization of a newform of level $\Gamma_1(N')$. The corresponding classical point $x_{f'} \in \mathcal C_{N'}^N$ maps to $x_f$ under $\mathcal C_{N'}^N \inject \mathcal C_N$. Since $f$ is regular of non-critical slope, so is $f'$. In addition, the generalized $\mathbf T_{\Gamma}$-eigenspace associated to $f'$ in $M_k(\Gamma')$ is one-dimensional (strong multiplicity one allows us to ignore the eigenvalues at primes dividing $N/N'$) and so the argument in \cite[Lemma 2.8]{Bellaiche-CriticalpadicLfunctions} immediately extends to show $\mathcal C_{N'}^N \goto \mathcal W$ is \'etale at $x_{f'}$. 

The previous paragraph reduces us to showing that $\mathcal C_{N'}^N \goto \mathcal C_N$ is \'etale at $x_{f'}$. Since it is a closed immersion, we need to show that in a sufficiently small neighborhood of $x_f$ in $\mathcal C_N$, every classical point arises from a Hecke eigensystem that is new of tame level $N'$. If $f$ is Eisenstein, it is ordinary since $f$ is assumed to have non-critical slope. This case can be handled explicitly, so we assume that $f$ is cuspidal. In that case the two-dimensional Galois representation associated with $f$ is absolutely irreducible and extends to a two-dimensional family of Galois representations is a neighborhood $U$ of $x_f$ on $\mathcal C_N$. If $Z \subset U$ is an irreducible component then the tame conductor of the Galois representations are constant at {\em classical} points in $Z$ (\cite{Saha-Conductors}). It follows that every classical point on $Z$ arises from level $N'$, finishing the proof. (Section \ref{sec:slope-bounds} contains further discussion of Galois representations.)
\end{proof}

\section{Constant slope radii and log derivatives}\label{sec:log-derive-bound}

\begin{definition}
Assume that $f \in M_{\kappa}^{\dagger}(\Gamma)$ satisfies \eqref{hyp:et}. A $p$-adic family passing through $f$ is an irreducible component $U$ of an affinoid neighborhood of $x_f$ in $\mathcal C_N$ such that $\kappa: U \goto \kappa(U)$ is a rigid analytic isomorphism onto an affinoid open subdomain $\kappa(U) \subset \mathcal W$.

We say a $p$-adic family $U$ has constant slope if $u \mapsto v_p(a_p(u))$ is constant.
\end{definition}

\begin{remark}
In this article, a $p$-adic family means explicitly that the weight function is a local coordinate of the family. It would be interesting to formulate an analog of Theorem \ref{theorem:L-invariant-in-text} that takes into account the interesting possibility of ramification of the weight map.
\end{remark}

There is an obvious way to construct a new $p$-adic family from an old one, by restricting the domain. In particular, since $a_p$ is non-vanishing on the eigencurve, any $f \in M_{\kappa}^{\dagger}(\Gamma)$ satisfying \eqref{hyp:et} must have some constant slope $p$-adic family $U$ passing through it (the construction of the eigencurve implies that $\kappa(U)\subset \mathcal W$ is necessarily an affinoid open subdomain for $U$ small enough). So, the following definition is well-posed.
\begin{definition}\label{defn:CS-defn}
Assume that $f \in M_{\kappa}^{\dagger}(\Gamma)$ satisfies \eqref{hyp:et}. Then,
\begin{equation*}
\CSw(f): = \inf \set{m_{\kappa}(\kappa(U)) \st U \text{ is a constant slope $p$-adic family through $f$}}.
\end{equation*}
\end{definition}
The notation $\CS$ indicates the phrase ``constant slope.'' We refer to $\CSw(-)$ as a/the constant slope radius. Let us stress again that our measurement is in terms of a valuation, rather than a norm. The superscript ``$w$'' indicates that we are measuring the radii according to the coordinate $w_\kappa$ as opposed to measuring directly in the weight variable $k$.

If $f \in M_{\kappa}^{\dagger}(\Gamma)$ is a finite slope eigenform satisfying \eqref{hyp:et} and $U$ is a $p$-adic family passing through $f$, with $W = \kappa(U)$, then the function $a_p \in \mathcal O(U)$ naturally defines an element $a_p \in \mathcal O(W)$. We can thus expand $a_p$ on $W$ as a power series in $w - w_\kappa$ 
\begin{equation*}
a_p(w) = a_p(w_\kappa) + a_p'(w_\kappa)(w-w_{\kappa}) + \dotsb.
\end{equation*}
Of course, $a_p(w_\kappa) = a_p(f)$.
\begin{theorem}\label{theorem:L-invariant-in-text}
If $f \in M_{\kappa}^{\dagger}(\Gamma)$ satisfies \eqref{hyp:et}, then
\begin{equation}\label{eqn:log-bound}
v_p(a_p(f)) - v_p(a_p'(w_\kappa)) \leq \CSw(f).
\end{equation}
\end{theorem}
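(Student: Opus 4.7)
The plan is to apply Proposition \ref{prop:constant-slope-approximation} essentially verbatim, after translating the setup from the eigencurve to the weight space via the chart $\kappa \mapsto w_\kappa$. The substantive analytic work (the maximum modulus argument) has already been done; all that remains is bookkeeping and an infimum.

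First I would fix an arbitrary constant slope $p$-adic family $U$ passing through $f$ and set $W = \kappa(U)$. By the definition of $p$-adic family, $\kappa : U \xrightarrow{\sim} W$ is a rigid analytic isomorphism onto an affinoid open subdomain of $\mathcal{W}$, so I can transport $a_p \in \mathcal{O}(U)$ to a function (still called $a_p$) on $W$. Letting $\epsilon$ be the type of $\kappa$, the subdomain $W \subset \mathcal{W}_\epsilon$ becomes, through the coordinate chart $\mathcal{W}_\epsilon \simeq \mathbb{B}^\circ$, an affinoid subdomain of $\mathbb{B}^\circ$ containing the point $w_\kappa$. The constant slope hypothesis on $U$ translates exactly to the statement that $w \mapsto v_p(a_p(w))$ is constant on $W$.

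Next I would apply Proposition \ref{prop:constant-slope-approximation} with $F = a_p$ and $w_0 = w_\kappa$, which gives
\begin{equation*}
v_p(a_p(w_\kappa)) - v_p(a_p'(w_\kappa)) \;\leq\; m_{w_\kappa}(W).
\end{equation*}
By construction $a_p(w_\kappa) = a_p(f)$, and $m_{w_\kappa}(W) = m_\kappa(\kappa(U))$ by the definition of $m_\kappa$ in the weight-space coordinate. Hence
\begin{equation*}
v_p(a_p(f)) - v_p(a_p'(w_\kappa)) \;\leq\; m_\kappa(\kappa(U)).
\end{equation*}
Finally, since $U$ was an arbitrary constant slope family through $f$, taking the infimum over all such $U$ yields the bound by $\CSw(f)$, as desired.

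There is no genuine obstacle; the only points that require a bit of care are (i) checking that the derivative $a_p'(w_\kappa)$ appearing in the theorem is exactly the derivative $F'(0)$ (after translating so $w_0 = 0$) that enters Proposition \ref{prop:constant-slope-approximation}, and (ii) confirming that the radius measured on the eigencurve side via $m_\kappa(\kappa(U))$ agrees with the radius measured in the Tate-algebra variable on the weight-space side. Both are immediate from the definitions set up in Sections \ref{sec:defn-1st-theorem} and \ref{sec:padic-eigenforms}. The hypothesis \eqref{hyp:et} enters only to guarantee that constant slope $p$-adic families through $f$ exist and that $\kappa$ is a local coordinate on them, which is what makes the notion $\CSw(f)$ well-defined in the first place.
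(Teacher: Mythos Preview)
Your proof is correct and follows exactly the same approach as the paper's own proof: apply Proposition \ref{prop:constant-slope-approximation} to $a_p$ on $W = \kappa(U)$ for an arbitrary constant slope family $U$, then take the infimum over such $U$. The paper's version is simply terser, omitting the bookkeeping details you spell out.
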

\begin{proof}
If $U$ is a constant slope $p$-adic family passing through $f$ and $W = \kappa(U)$, then
\begin{equation*}
v_p(a_p(f)) - v_p(a_p'(w_\kappa)) \leq m_{\kappa}(W)
\end{equation*}
by Proposition \ref{prop:constant-slope-approximation}. Taking the infimum over such $U$ proves the theorem.
\end{proof}

Let us make some minor consistency checks. First, it is  possible that $a_p'(w_\kappa) = 0$ (as opposed to $a_p(f)$, which is always non-zero). In that case, the result is trivial because $0 \leq \CSw(f)$ by definition. More generally, Theorem \ref{theorem:L-invariant-in-text} is also trivial if the left-hand side of \eqref{eqn:log-bound} happens to be non-positive (cf.\ Example \ref{example:single-L-invariant-example}).

A second check we might use is Hida theory (where $v_p(a_p(f)) = 0$). Hida theory provides many examples where $\CSw(f) = 0$, i.e.\ examples where $f$ lives in a $p$-adic family that maps isomorphically onto an entire component of $p$-adic weight space. If that is so, our bound implies that $v_p(a_p'(w_\kappa)) \geq 0$. That is consistent because, in such examples, the function $a_p$ is defined by a power series with integral coefficients.

Finally, in the next two sections we will examine Theorem \ref{theorem:L-invariant-in-text} through estimates on $a_p'/a_p$ that are accessible for non-trivial reasons. It would be interesting to find an $f$ (if it exists) where taking into account higher derivatives, as in Remark \ref{rmk:constant-slope-root-test}, provided an improvement on Theorem \ref{theorem:L-invariant-in-text}. Likewise, in Theorem \ref{theorem:L-invariant-in-text} you could also replace $p$ by a prime $\ell \neq p$ provided $a_\ell(f)$ is non-zero. Could that improve the estimate for $\CS^w(f)$?

\section{Relationship with $\mathscr L$-invariants}\label{sec:Linvariants}
If $f$ is a cuspidal and $p$-new, then $f$ is regular and in fact its slope is ${k-2\over 2} < k-1$. So, Theorem \ref{theorem:L-invariant-in-text} applies to $f$. On the other hand, such an $f$ also has an $\mathscr L$-invariant $\mathscr L_f \in \bar \Q_p$ defined in \cite{Mazur-Monodromy} and known as the ``Fontaine--Mazur $\mathscr L$-invariant.''

\begin{lemma}\label{lemma:L-invariant}
If $f \in S_k(\Gamma)$ is a $p$-new eigenform, then
\begin{equation}\label{eqn:L-invariant}
v_p(\mathscr L_f) = 2v_p(2) + 1 + v_p\left({a_p'(w_\kappa)\over a_p(f)}\right).
\end{equation}
\end{lemma}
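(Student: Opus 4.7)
The plan is to deduce the identity from the Greenberg--Stevens formula expressing $\mathscr L_f$ as a logarithmic derivative of $a_p$ along a Coleman family, combined with a chain-rule conversion between the classical weight variable $k$ and the coordinate $w_\kappa$ used in this paper.

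First, I would invoke Greenberg--Stevens and its extensions. For $f$ a $p$-new cuspidal eigenform of weight $k\geq 2$, Mazur's $\mathscr L$-invariant is given by
\begin{equation*}
\mathscr L_f = -2\cdot \frac{a_p'(k)}{a_p(f)},
\end{equation*}
where $a_p'(k)$ denotes the derivative of $a_p$, viewed as a function of the classical weight $k$, along a Coleman family through $f$. For $k=2$ this is the original Greenberg--Stevens theorem, and the extension to arbitrary $k\geq 2$ is due to Stevens, with alternate proofs by Coleman--Iovita, Colmez, and Emerton. Since $v_p(-2)=v_p(2)$, passing to valuations yields
\begin{equation*}
v_p(\mathscr L_f) = v_p(2) + v_p(a_p'(k)) - v_p(a_p(f)).
\end{equation*}

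Next, I would compare $a_p'(k)$ with $a_p'(w_\kappa)$ by the chain rule applied to $w_k = \gamma^k-1$. This gives $a_p'(k) = a_p'(w_\kappa)\cdot \gamma^k\log\gamma$, and since $v_p(\gamma)=0$ only $v_p(\log\gamma)$ contributes. A direct computation (the infinitesimal form of Remark \ref{rmk:variable-change}) shows $v_p(\log\gamma) = 1 + v_p(2)$: for $p$ odd take $\gamma = 1+p$ so that $\log(1+p) = p - p^2/2+\cdots$ has valuation $1$; for $p=2$ take $\gamma=5$ so that $\log 5$ has valuation $2$. Substituting
\begin{equation*}
v_p(a_p'(k)) = v_p(a_p'(w_\kappa)) + 1 + v_p(2)
\end{equation*}
into the display above produces the required identity.

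The main obstacle is Step 1: one must match Mazur's convention for $\mathscr L_f$ against whichever version of the higher-weight Greenberg--Stevens formula one cites, since different sources differ by factors of $\pm1$, $\pm 2$, or $a_p(f)^{\pm 1}$. At the level of $p$-adic valuation, however, only a discrepancy of valuation $v_p(2)$ can survive, and once that factor is pinned down the rest of the argument is a routine chain rule.
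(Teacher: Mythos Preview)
Your argument is correct and follows essentially the same route as the paper: invoke the Greenberg--Stevens formula $\mathscr L_f = -2\,a_p(f)^{-1}\,\frac{d}{dk}a_p(k)$ (the paper cites Colmez for the higher-weight case), then convert the $k$-derivative to a $w$-derivative. The only cosmetic difference is that the paper packages the chain-rule step as an appeal to Remark~\ref{rmk:variable-change}, whereas you compute $v_p(\log\gamma)=1+v_p(2)$ directly.
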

\begin{proof}
A generalization of a famous theorem of Greenberg and Stevens tells us
\begin{equation}\label{eqn:greenberg-stevens}
\mathscr L_f = - 2 a_p(f)^{-1} {\mder /k a_p(k)},
\end{equation}
where $a_p(-)$ is expanded as a series in the variable $k \in \Z_p$ (see \cite[Corollaire 0.7]{Colmez-L_invariants}). By Remark \ref{rmk:variable-change},
\begin{equation}\label{eqn:valuations-change}
 v_p\left(\mder /k a_p(k)\right) = v_p\left(a_p'(w_\kappa)\right) + 1 + v_p(2).
\end{equation}
Thus \eqref{eqn:L-invariant} follows from \eqref{eqn:greenberg-stevens} and \eqref{eqn:valuations-change}.
\end{proof}

By Remark  \ref{rmk:variable-change}, it seems reasonable to make the the following alternate normalization of constant slope radii.
\begin{definition}\label{defn:CSk}
$\CS^k(-) := \max(0,\CS^w(-) -1 - v_p(2))$.
\end{definition}
For instance, if $f \in M_k(\Gamma)$ is an eigenform, then $\CS^k(f) = m_0 \in \Z$ means that $f$ lives in a $p$-adic family over all the integers $k' \congruent k \bmod (p-1)p^{m_0}$. This is what we used in the introduction. The effect of the maximum is to restrict to measuring only portions of families lying over the most the smallest affinoid disc containing $\Z_p$, i.e.\ the most central part of the $p$-adic weight space.

\begin{theorem}\label{theorem:L-invariant-normalization}
If $f \in S_k(\Gamma)$ is a $p$-new eigenform, then 
\begin{equation*}
-v_p(\mathscr L_f) + v_p(2) \leq \CS^k(f).
\end{equation*}
\end{theorem}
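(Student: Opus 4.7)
The plan is to chain together Theorem \ref{theorem:L-invariant-in-text}, Lemma \ref{lemma:L-invariant}, and the normalization from Definition \ref{defn:CSk}. Since $f$ is $p$-new, it is regular in the sense of Definition \ref{defn:refined-eigenform-defns}, and the slope computation recorded at the start of this section gives non-critical slope $(k-2)/2 < k-1$. Hence Proposition \ref{prop:p-refined-etale} guarantees \eqref{hyp:et}, so Theorem \ref{theorem:L-invariant-in-text} applies to $f$.

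Next, Theorem \ref{theorem:L-invariant-in-text} reads
\[
v_p(a_p(f)) - v_p(a_p'(w_\kappa)) \leq \CSw(f),
\]
while Lemma \ref{lemma:L-invariant} identifies the left-hand side with $2v_p(2) + 1 - v_p(\mathscr L_f)$. Substituting and rearranging gives
\[
-v_p(\mathscr L_f) + v_p(2) \leq \CSw(f) - 1 - v_p(2).
\]
Since Definition \ref{defn:CSk} yields $\CSk(f) \geq \CSw(f) - 1 - v_p(2)$, the conclusion follows immediately.

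The whole argument is just the concatenation of two previously-established results with an algebraic rearrangement, so no step presents a serious obstacle. The one point worth flagging is the $\max(0,\cdot)$ appearing in the definition of $\CSk(-)$: it makes the inequality $\CSk(f) \geq \CSw(f) - 1 - v_p(2)$ hold unconditionally, so the bound remains trivially valid in the degenerate regime where $\CSw(f) < 1 + v_p(2)$ (in which case the right-hand side $-v_p(\mathscr L_f) + v_p(2)$ is automatically negative). Thus no case split on the sign of $\CSw(f) - 1 - v_p(2)$ is required.
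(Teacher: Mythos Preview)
Your proof is correct and follows the same approach as the paper, which simply says the result is immediate from Theorem \ref{theorem:L-invariant-in-text} and Lemma \ref{lemma:L-invariant}; you have merely spelled out the arithmetic and the verification of \eqref{hyp:et} explicitly. One minor wording slip: in your final paragraph you refer to $-v_p(\mathscr L_f) + v_p(2)$ as ``the right-hand side,'' but it is the left-hand side of the theorem's inequality.
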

\begin{proof}
This is immediate from Theorem \ref{theorem:L-invariant-in-text} and Lemma \ref{lemma:L-invariant}.
\end{proof}

\begin{example}\label{example:single-L-invariant-example}
Let $p = 5$, $N=1$, and $k=10$. In $S_{10}(\Gamma_0(5))$ there are three newforms $f_0,f_1,f_2$. Robert Pollack has computer programs that compute $\mathscr L$-invariants.\footnote{We do not explain Pollack's method here. It is  generally based on combining the Mazur--Tate--Teitelbaum conjecture with calculations of $p$-adic $L$-functions. The reader may also be interested in the recent works \cite{Graef-Linvariant, ABGT-Linvariants}.} Up to labeling, they satisfy $v_5(\mathscr L_{f_0}) = 2$ and $v_5(\mathscr L_{f_1}) = v_5(\mathscr L_{f_2}) = -2$. So, Theorem \ref{theorem:L-invariant-normalization} gives no information for $f_0$, but for $f = f_1$ or $f = f_2$ we learn that a constant slope family passing through $f$ must be restricted at least to weights $k' \congruent 10 \bmod 4 \cdot 5^2$. See Section \ref{section:gouvea-mazur-comparison} for further discussions of this data.
\end{example}

\begin{example}\label{example:breuil-mezard}
Suppose that $f \in S_k(\Gamma_0(Np))$ is new at $p$ and $v_p(\mathscr L_f) < 1 - {k-2\over 2}$. Then Theorem \ref{theorem:L-invariant-normalization} implies that ${k-2\over 2} - 1 < \CSk(f)$. The slope of $f$ is ${k-2\over 2}$ so, in spirit, $f$ is nearly a counter-example to the Gouv\^ea--Mazur conjecture. 

On the other hand, at least if $p > 2 $ and $2 < k < p+1$, Breuil and M\'ezard separately calculated (\cite[Corollary 4.3.3.1]{BreuilMezard-Multiplicities}) that $v_p(\mathscr L_f) < 1 - {k-2\over 2}$ is sufficient to guarantee that the mod $p$ Galois representation associated with $f$ is irreducible, even after restricting to a decomposition group at $p$. This is consistent with the yoga used by Buzzard and Calegari to find a counter-example to the Gouv\^ea--Mazur conjecture. 

We'll add that, according to Pollack's data, when $p=59$ and $N=1$ there is a newform in weight $k=16$ whose $\mathscr L$-invariant has valuation $-7 < 1 - {16-2\over 2}$.
\end{example}

\begin{remark}\label{remark:benois}
Benois (\cite{Benois-GreenbergL}) has given a definition, generalizing work of Greenberg (\cite{Greenberg-TrivialZeroes}), of an ``$\mathscr L$-invariant'' associated with any eigenform $f \in S_k(\Gamma)$ under a suitable Selmer group hypothesis (which is conjecturally always true). Mok showed in \cite{Mok-LInvariant} that \eqref{eqn:greenberg-stevens} still holds. So, we could have stated Theorem \ref{theorem:L-invariant-normalization} in this generality as well. We do not know, however, of any way to directly calculate the $\mathscr L$-invariants beyond the $p$-new cases. (We do know how to compute the logarithmic derivatives sometimes. See the proof of Proposition \ref{prop:2adic-example}.)
\end{remark}

\section{An example (after Emerton)}\label{section:emerton}

In Example \ref{example:single-L-invariant-example}, we saw that Theorem  \ref{theorem:L-invariant-in-text} sometimes yields no information. Our goal here is to give one non-trivial example where the bound in Theorem \ref{theorem:L-invariant-in-text} is provably an equality.

For this section, we suppose that $p=2$ and $N=1$. The vector space $S_{14}(\Gamma_0(2))$ is two-dimensional and spanned by newforms $f^{\pm}$ labeled according to whether $a_2(f^{\pm}) = \pm 2^6$ (each happens once). What is special about this situation is that Emerton has given (\cite{Emerton-Thesis}) explicit equations defining the lowest slope cuspidal families of the $2$-adic tame level one eigencurve  $\mathcal C$. Specifically, if $W = \B(w_{14},6)$, then Emerton constructed an irreducible region $U \subset \mathcal C$ on which $a_2$ has constant slope $6$, the points of $U$ correspond to the lowest slope cuspidal eigenforms at weights $\kappa \in W$, and $\kappa : U \goto W$ is a two-to-one cover ramified at two points. Especially, each of $f^{\pm}$ defines a point $x_{\pm}:= x_{f^{\pm}}$ on $U$ and Proposition \ref{prop:p-refined-etale} tells us $\kappa$ is \'etale at each $x_{\pm}$. So, $a_2$ has a series expansion around $w = w_{14}$ (the expansion depends on $\pm$ of course).

\begin{proposition}\label{proposition:emertons-result}
The ramification of ${\kappa}|_{U}$ occurs at two weights $\kappa$ with 
\begin{equation*}
v_2(w_\kappa - w_{14}) = 7.
\end{equation*}
In particular, $\CSw(f^{\pm}) = 7$.
\end{proposition}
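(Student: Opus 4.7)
The plan is to use Emerton's explicit formulas for $U$ to pin down the ramification locus of $\kappa|_U$ exactly, and then to read off $\CSw(f^\pm)$ from the disc geometry on $W$.

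First I would translate the question into a discriminant calculation. Since $\kappa|_U: U \to W$ is a finite flat cover of degree $2$, the function $a_2 \in \mathcal O(U)$ satisfies a monic quadratic relation $a_2^2 - T(w) a_2 + N(w) = 0$ with $T, N \in \mathcal O(W)$, and the ramification locus of $\kappa|_U$ is exactly the zero locus of the discriminant $D(w) := T(w)^2 - 4 N(w)$. Proposition \ref{prop:p-refined-etale} tells us $\kappa|_U$ is \'etale at each $x_\pm$, so the two preimages of $w_{14}$ are precisely $x_\pm$, giving $T(w_{14}) = a_2(f^+) + a_2(f^-) = 0$ and $N(w_{14}) = -a_2(f^+)^2 = -2^{12}$. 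In particular $v_2(D(w_{14})) = 14$.

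Next I would extract the valuations of the zeros of $D$ from Emerton's description. His thesis provides explicit rigid-analytic formulas for $a_2|_U$ (equivalently, for $T$ and $N$) as power series in $w-w_{14}$; from these one reads off the Newton polygon of $D$ on $W = \B(w_{14},6)$. Since the cover has degree $2$ and is ramified over exactly two geometric points of $W$, the function $D$ has exactly two zeros in $W$ (counted with multiplicity), and the claim to verify is that both have $v_2(w - w_{14}) = 7$. This reduces to inspecting enough leading coefficients in Emerton's expansions of $T$ and $N$ at $w_{14}$ to force the Newton polygon of $D - D(w_{14})$ to have slope $-7$ twice. The main obstacle is precisely this step: it is routine but requires importing precise numerical data from Emerton's calculations.

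Finally, the assertion $\CSw(f^\pm) = 7$ falls out from the disc geometry. For any rational $m > 7$, the affinoid $\B(w_{14},m)$ avoids the ramification locus (which sits on the sphere $v_2(w-w_{14}) = 7$), so $\kappa^{-1}(\B(w_{14},m))$ splits as a disjoint union of two affinoids, each mapping isomorphically onto $\B(w_{14},m)$. The component through $x_\pm$ is a constant slope $p$-adic family of radius $m$, so $\CSw(f^\pm) \leq m$; letting $m \searrow 7$ gives $\CSw(f^\pm) \leq 7$. Conversely, for any rational $m \leq 7$, the disk $\B(w_{14},m)$ contains a ramification point of $\kappa|_U$, so no section of $\kappa|_U$ is defined over it and no constant slope family through $f^\pm$ achieves radius $\leq 7$. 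Hence $\CSw(f^\pm) = 7$.
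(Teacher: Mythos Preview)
Your proposal is correct and takes the same approach as the paper: both defer the key ramification computation to Emerton's explicit equations (the paper simply cites the estimate of a quantity ``$d_1$'' in the proof of \cite[Lemma~4.13]{Emerton-Thesis}, which encodes exactly the discriminant/Newton-polygon data you set up). You have spelled out the deduction of $\CSw(f^\pm)=7$ from the ramification locus more carefully than the paper, which treats that clause as immediate; the one point you leave implicit is that any constant slope family through $f^\pm$ over a subdisc of $W$ must sit inside $U$ (true because $U$ is the full slope-$6$ locus over $W$, as one checks via Coleman's classicality theorem at $w_{14}$), but the paper does not spell this out either.
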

\begin{proof}
This is contained in the proof of \cite[Lemma 4.13]{Emerton-Thesis}. There, an equation for $U$ is given and our claim follows from the estimate of ``$d_1$'' in the middle of the proof.
\end{proof}
We now present the following calculation, complementary to Proposition  \ref{proposition:emertons-result}, that shows that the bound in Theorem \ref{theorem:L-invariant-in-text} is tight.
\begin{proposition}\label{prop:2adic-example}
$v_2\left(a_2'(w_{14})\over a_2(f^{\pm})\right) = -7$.
\end{proposition}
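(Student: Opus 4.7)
The plan is to compute $a_2'(w_{14})$ directly from Emerton's explicit equation for $U$. Since $\kappa \colon U \to W$ is a degree-two finite cover, the function $a_2|_U$ is a root of a monic quadratic relation over $\mathcal{O}(W)$,
\begin{equation*}
a_2^2 + A(w)\, a_2 + B(w) = 0, \qquad A, B \in \mathcal{O}(W).
\end{equation*}
Evaluating at $w = w_{14}$, where the two fibers are $a_2(f^{\pm}) = \pm 2^6$, gives $A(w_{14}) = 0$ and $B(w_{14}) = -2^{12}$. Moreover, because the slope of $a_2$ is constantly equal to $6$ on all of $U$, we have $v_2(B(w)) = 12$ identically on $W$, so $B(w) = -2^{12}\, u(w)$ for some unit $u \in \mathcal{O}(W)$ with $u(w_{14}) = 1$.

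Implicit differentiation of the quadratic relation and specialization at $w_{14}$ (using $A(w_{14}) = 0$) yields, on the branch through $f^{\pm}$,
\begin{equation*}
a_2'(w_{14}) = -\frac{A'(w_{14})}{2} \mp \frac{B'(w_{14})}{2^7}.
\end{equation*}
Dividing by $a_2(f^{\pm}) = \pm 2^6$ and substituting $B'(w) = -2^{12}\, u'(w)$ gives
\begin{equation*}
\frac{a_2'(w_{14})}{a_2(f^{\pm})} = \mp\frac{A'(w_{14})}{2^7} + \frac{u'(w_{14})}{2}.
\end{equation*}
Hence it suffices to show $v_2(u'(w_{14})) = -6$ and $v_2(A'(w_{14})) > 0$, so that the $u'$-term realizes valuation $-7$ on both branches and the $A'$-term is strictly smaller.

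Both inputs should be read off from Emerton's construction in the proof of \cite[Lemma~4.13]{Emerton-Thesis}. The bound $v_2(u'(w_{14})) \geq -6$ holds automatically because $u$ is a unit of sup-norm $1$ on $W = \B(w_{14}, 6)$, and equality amounts to saying that the linear coefficient of $u$ at $w_{14}$ realizes the Gauss-valuation bound. This is essentially the content of Emerton's estimate of ``$d_1$'' used in Proposition~\ref{proposition:emertons-result}: the distance from $w_{14}$ to the ramification locus (the zeros of the discriminant $D(w) = A(w)^2 - 4 B(w)$) being exactly $2^{-7}$ is controlled by the leading behavior of $B(w)$ near $w_{14}$, and these two estimates are equivalent given that $A(w_{14}) = 0$. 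The second claim $v_2(A'(w_{14})) > 0$ is a separate inspection of Emerton's expansion of $A(w)$ and is the milder of the two conditions.

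\textbf{Main obstacle.} The crux is the bookkeeping in Step~3: translating Emerton's explicit coefficients into the precise valuations $v_2(u'(w_{14})) = -6$ and $v_2(A'(w_{14})) > 0$. Only the former is needed to pin the valuation of the answer down to $-7$, but both are necessary to conclude that the same answer is attained on both branches without relying on accidental cancellation.
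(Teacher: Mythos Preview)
Your route is genuinely different from the paper's. The paper never touches Emerton's quadratic equation; instead it invokes the Coleman--Stevens--Teitelbaum identity
\[
\frac{a_2'(w_{14})}{a_2(f^{\pm})}
= a_2(f^{\pm})\,\frac{\partial_w P(w,t)}{\partial_t P(w,t)}\Big|_{w=w_{14},\,t=a_2(f^{\pm})^{-1}},
\]
where $P(w,t)$ is the Fredholm series of $U_2$ on overconvergent cuspforms. The numerator's valuation is computed from a short table of $v_2(c_i'(w_{14}))$ obtained via Koike's formula, with the tail controlled by the Buzzard--Kilford bound $c_i(w)\in(8,w)^{\binom{i+1}{2}}\Z_2[\![w]\!]$; the denominator is handled by Coleman's classicality theorem, which shows the only small-slope factors of $P(w_{14},t)$ are $(1-a_2(f^{+})t)(1-a_2(f^{-})t)$. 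This yields $-6-(-7)\cdot 0 - 1 = -7$ without any appeal to Emerton's explicit model of $U$.

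Your setup via implicit differentiation is correct, and the reduction to the two inputs $v_2(u'(w_{14}))=-6$ and $v_2(A'(w_{14}))>0$ is clean. The gap is that neither input follows from Proposition~\ref{proposition:emertons-result} alone. You assert that the ramification points lying at valuation $7$ is ``equivalent'' to $v_2(u'(w_{14}))=-6$ given $A(w_{14})=0$, but this is not so: since $D'(w_{14})=2^{14}u'(w_{14})$, you need $v_2(D'(w_{14}))=8$, whereas Weierstrass-factoring $D=(w-r_1)(w-r_2)V$ with $v_2(r_i-w_{14})=7$ and $V$ a unit on $\B(w_{14},6)$ only gives $v_2(D'(w_{14}))\ge 7$, with equality possible if $v_2((w_{14}-r_1)+(w_{14}-r_2))=7$. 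So the ramification statement does not by itself pin down the linear coefficient of $B$; you must in any case extract both $A'(w_{14})$ and $u'(w_{14})$ directly from Emerton's explicit expansion. That is doable, and your outline would then succeed, but as written it leans on an equivalence that is not established. The paper's approach trades this dependence on \cite{Emerton-Thesis} for a short numerical table and a uniform tail estimate, making the computation fully self-contained.
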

\begin{proof}
One way to prove this is to compute $\mathscr L_{f^{\pm}}$ and then apply Lemma \ref{lemma:L-invariant}. Pollack did this for us and told us our proposition was correct. For the sake of completeness, let us give an alternative calculation.

Write $P(w,t) = 1 + \sum c_i(w)t^i \in \Z_2[\![w,t]\!]$ for the characteristic series of the $U_2$-operator acting on overconvergent $2$-adic cuspforms of even weight (those $\kappa$ such that $\kappa(-1) = 1$). For such weights $\kappa$, $P(w_\kappa,t) \in \C_2[\![t]\!]$ is equal to $\det(1 - tU_2|S_\kappa^{\dagger}(\Gamma))$. As discussed in \cite[Section IV]{ColemanStevensTeitelbaum-Experiments}, we have
\begin{equation}\label{eqn:log-derivative}
{a_2'(w_{14})\over a_2(f^{\pm})} = a_2(f^{\pm}){{\mder /w}P(w,t) \over {\mder /t P(w,t)}} \bigg|_{w=w_{14},t=a_2(f^{\pm})^{-1}}.
\end{equation}
In previous work with Pollack (\cite{BergdallPollack-FredholmSlopes}) we calculated $P(w,t)$ to a high accuracy using Koike's formula (see \cite{Robwebsite} also). From this, we  deduce the values given in Table \ref{table:derivatives-fourteen} below.
\begin{table}[htp]
\renewcommand{\arraystretch}{1.2}
\begin{center}
\caption{Valuations of derivatives of the coefficients of $P(w,t)$ at $w = w_{14}$.}\label{table:derivatives-fourteen}
\begin{tabular}{|r||c|c|c|c|c|c|c|c|c|}
\hline
$i$ & 1 & 2 & 3 & 4 & 5 & 6 & 7 & 8 & 9\\
\hline
$v_2(c_i'(w_{14}))$ & 0 & 7 & 19 & 34 & 60 & 78 & 106 & 140 & 179\\
\hline
$v_2(c_i'(w_{14})a_2(f^{\pm})^{-i})$ & -6 & -5 & 1 & 10 & 30 & 42 & 64 & 92 & 125\\
\hline
\end{tabular}
\end{center}
\end{table}

It thus seems likely that ${\mder /w} P(w,a_2(f^{\pm})^{-1})|_{w=w_{14}} = \sum c_i'(w_{14}) a_2(f^{\pm})^{-i}$  has $2$-adic valuation $-6$. To prove this, recall that a theorem of Buzzard and Kilford (\cite{BuzzardKilford-2adc}) implies that $c_i(w) \in (8,w)^{\lambda_i}\Z_2[\![w]\!]$ where $\lambda_i = {i+1\choose 2}$. Thus,
\begin{equation}\label{eqn:bk-inequality}
v_2(c_i'(w_{14})a_2(f^{\pm})^{-i}) \geq 3\left(\lambda_i - 1\right) - 6i
\end{equation}
for each $i$. The function on the right-hand side of \eqref{eqn:bk-inequality} is minimized at $i=3/2$ and at $i=3$ its value is  $-3$. So, the first few columns in Table \ref{table:derivatives-fourteen} do indeed imply  that
\begin{equation}\label{eqn:numer-valuation}
v_2\left({\mder /w} P(w,a_2(f^{\pm})^{-1})|_{w=w_{14}}\right) = v_2\left(c_1'(w_{14})a_2(f^{\pm})^{-1}\right) = -6.
\end{equation}
For the denominator in \eqref{eqn:log-derivative}, by Coleman's classicality theorem (\cite[Theorem 6.1]{Coleman-ClassicalandOverconvergent}) we have
\begin{equation*}
P(w_{14},t) = (1 - a_2(f^{\pm})t)(1-a_2(f^{\mp})t)\prod_{\beta}(1-\beta t)
\end{equation*}
where $v_2(\beta) \geq 13$ for each $\beta$. Then, the product rule implies that
\begin{equation}\label{eqn:denom-valuation}
{\mder /t}P(w_{14},t) \big|_{t=a_2(f^{\pm})^{-1}}
 = -2 a_2(f^{\pm})u
\end{equation}
where $u$ is a $2$-adic unit. From \eqref{eqn:numer-valuation} and \eqref{eqn:denom-valuation} we deduce that
\begin{equation*}
v_2\left(a_2'(w_{14})\over a_2(f^{\pm})\right) = -7,
\end{equation*}
as claimed.
\end{proof}

\section{Crystalline lifts with prescribed slope}\label{section:crys-lifts}
This section concerns the mod $p$ reduction of certain two-dimensional representations of $G = \Gal(\bar \Q_p/\Q_p)$. In Section \ref{sec:slope-bounds}, we will apply the result proved here to give a second bound on constant slope radii. Because of our intended application, we  make one global consideration (see Remark \ref{rmk:global-consideration}). Otherwise, our discussion is completely local. 

We write $\chi_{\cycl}$ for the cyclotomic character and $\bar \chi_{\cycl}$ for its mod $p$ reduction. We also write $I$ for the inertia subgroup of $G$. Throughout this section $V$ will generally mean a continuous, two-dimensional, $\bar \Q_p$-linear representation of $G$. To simplify notation, we use $\simeq_I$ to mean isomorphic as $I$-representations (and $\not\simeq_I$ accordingly).

In \cite{Fontaine-RepresentationSemiStable}, Fontaine defined what it means for $V$ to be crystalline (for general coefficients, see also \cite[Section 3]{BreuilMezard-Multiplicities}). A crystalline representation is uniquely determined by a certain two-dimensional $\bar \Q_p$-vector space $D_{\crys}(V)$ that is equipped with a filtration, called the Hodge filtration, and a linear operator $\varphi$, called the crystalline Frobenius. One consequence of the classification is that irreducible crystalline representations $V$ are parameterized up to twists by two numbers:\ first, an integer $k\geq 2$ and, second, an element $a_p \in \bar \Q_p$ such that $v_p(a_p) > 0$. (See \cite{BergerLiZhu-SmallSlopes} or \cite[Section 3]{Breuil-SomeRepresentations2} for discussion and references.) Given such a pair $(k,a_p)$, write $V_{k,a_p}$ for the corresponding representation. Concretely, we normalize this so that the Hodge filtration on $D_{\crys}(V_{k,a_p})$ has weights $0 < k-1$ and the Frobenius $\varphi$ is non-scalar with characteristic polynomial $X^2 - a_p X + p^{k-1}$. (We say the cyclotomic character has Hodge--Tate weight $-1$.)

For any $\bar \Q_p$-linear representation $V$ of $G$ there is a $\bar \Z_p$-linear and $G$-stable lattice $T \subset V$. We define $\bar V = (T \otimes_{\bar \Z_p} \bar \F_p)^{\ssfy}$ where the superscript ``$\ssfy$'' means to semi-simply the $G$-action on the $\bar \F_p$-vector space $T \otimes_{\bar \Z_p} \bar \F_p$.  It is well-known that $V \mapsto \bar V$ is independent of the choice of $T$. In what follows, we write $\bar V_{k,a_p}$ for what a pedant would write  $\bar{V_{k,a_p}}$.

Now fix a fundamental character $\omega_2$ of level 2 (\cite[Section 1.7]{Serre-PropertiesGaloisiennes}). This is a tame character on inertia and $\omega_2^{p+1} = \chi_{\cycl}$. Since $V$ is two-dimensional, \cite[Proposition 1]{Serre-SurLesRep} implies:
\begin{enumerate}
\item If $\bar V$ is irreducible, then $\bar V \simeq_I \omega_2^s \oplus \omega_2^{ps}$ for some integer $s$ not divisible by $p+1$.
\item If $\bar V$ is reducible then $\bar V \simeq_I \bar\chi_{\cycl}^{a} \oplus \bar \chi_{\cycl}^b$ for two integers $a,b$.
\end{enumerate}
\begin{notation}\label{defn:s(k,ap)}
If $k \geq 2$ and $v(a_p) > 0$, we write $s(k,a_p)$ for the choice of any integer according to the following two cases.
\begin{enumerate}
\item If $\bar V_{k,a_p}$ is irreducible and $\bar V_{k,a_p} \simeq_I \omega_2^s \oplus \omega_2^{ps}$, then $s(k,a_p) = s$.
\item If $\bar V_{k,a_p}$ is reducible, then $s(k,a_p) = 0$.
\end{enumerate}
\end{notation}
We call this notation, rather than a definition,  because there is ambiguity in case (a). Namely, since $\omega_2$ has order $p^2-1$, only the two values $s,ps \bmod{p^2-1}$ together are well-defined. That implies $\pm s(k,a_p) \bmod p+1$ is well-defined, giving meaning to the next proposition.

\begin{proposition}\label{prop:see-obstruction}
Let $k \geq 2$, $v_p(a_p) > 0$, and write $h = v_p(a_p)$. Then, for each integer $k'\geq 2$ such that
\begin{enumerate}
\item $k'-1 \not\congruent \pm s(k,a_p) \bmod p+1$ and
\item $\floor{{k'-2\over p-1}} < h$,
\end{enumerate}
we have $\bar V_{k',a_p'} \not\simeq_I \bar V_{k,a_p}$ for all $a_p'$ with $v_p(a_p') = h$.
\end{proposition}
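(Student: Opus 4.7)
The plan is to apply the theorem of Berger, Li, and Zhu \cite{BergerLiZhu-SmallSlopes} directly to the pair $(k', a_p')$. First I would rewrite hypothesis (b) as $v_p(a_p') = h > \bfloor{(k'-2)/(p-1)}$, placing the pair in the small-slope regime of loc.\ cit. That theorem then gives that $\bar V_{k', a_p'}$ is irreducible as a $G$-representation and that
\begin{equation*}
\bar V_{k', a_p'} \simeq_I \omega_2^{k'-1} \oplus \omega_2^{p(k'-1)}.
\end{equation*}
In particular, case (a) of Notation \ref{defn:s(k,ap)} applies to $(k', a_p')$, with $s(k', a_p') \equiv \pm(k'-1) \bmod p+1$.

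Next, I would assume toward a contradiction that $\bar V_{k', a_p'} \simeq_I \bar V_{k, a_p}$ and dispatch the two cases of Notation \ref{defn:s(k,ap)} for $(k, a_p)$ in turn. If $\bar V_{k, a_p}$ is reducible, then $s(k, a_p) = 0$, so hypothesis (a) reads $p+1 \nmid k'-1$. But then $\bar V_{k, a_p}|_I$ is a sum of powers of $\bar \chi_{\cycl} = \omega_2^{p+1}$, and the assumed isomorphism would force $\omega_2^{k'-1}$ to equal some power of $\bar \chi_{\cycl}$, i.e.\ $p+1 \mid k'-1$ -- a contradiction. Otherwise $\bar V_{k, a_p}$ is irreducible, and comparing $I$-decompositions gives
\begin{equation*}
\omega_2^{k'-1} \oplus \omega_2^{p(k'-1)} \simeq_I \omega_2^{s(k, a_p)} \oplus \omega_2^{p s(k, a_p)}.
\end{equation*}
Matching the two sides as unordered pairs of characters of $I$ of order dividing $p^2-1$ and then reducing modulo $p+1$ (using $p \equiv -1 \bmod p+1$) yields $k'-1 \equiv \pm s(k, a_p) \bmod p+1$, again contradicting (a).

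The only substantive input is the Berger--Li--Zhu description of $\bar V_{k', a_p'}|_I$ in the small-slope range; everything else is bookkeeping with tame inertial characters and the fact that $\omega_2^s$ is a power of $\bar \chi_{\cycl}$ on $I$ exactly when $(p+1) \mid s$. Correspondingly, I expect the main difficulty when writing the proof to be simply quoting loc.\ cit.\ with the precise boundary $\bfloor{(k'-2)/(p-1)}$ and tracking the normalizations of $\omega_2$ and Hodge--Tate weights, rather than any further conceptual content.
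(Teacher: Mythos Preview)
Your approach is essentially the same as the paper's: apply Berger--Li--Zhu to $(k',a_p')$ and then split on whether $\bar V_{k,a_p}$ is reducible or irreducible, comparing tame inertial characters in each case. One small imprecision: BLZ does \emph{not} always give that $\bar V_{k',a_p'}$ is irreducible---when $p+1\mid k'-1$ it is reducible (Theorem~\ref{thm:blz}(b))---so your blanket irreducibility claim is false as stated. This does not damage the argument, however, since in the reducible-$(k,a_p)$ case hypothesis (a) already forces $p+1\nmid k'-1$, and in the irreducible-$(k,a_p)$ case a reducible $\bar V_{k',a_p'}$ cannot be $\simeq_I$ to the irreducible $\bar V_{k,a_p}$ anyway (its inertial characters are powers of $\bar\chi_{\cycl}$, while $\omega_2^{s(k,a_p)}$ is not); the paper handles this by simply saying ``if $\bar V_{k',a_p'}$ is reducible we are done.''
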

In order to prove the proposition, we recall the following theorem of Berger, Li, and Zhu (\cite{BergerLiZhu-SmallSlopes}).
\begin{theorem}[Berger--Li--Zhu]\label{thm:blz}
Let $k \geq 2$, $v_p(a_p) > 0$, and assume that
\begin{equation}\label{eqn:blz}
\bfloor{{k-2\over p-1}} < v(a_p).
\end{equation}
Then, the following conclusions hold.
\begin{enumerate}
\item If $k-1 \not\congruent 0 \bmod p+1$, then $\bar V_{k,a_p} \simeq_I \omega_2^{k-1} \oplus \omega_2^{p(k-1)}$. In particular, $\bar V_{k,a_p}$ is irreducible.
\item If $k -1 \congruent 0 \bmod p+1$, then $\bar V_{k,a_p} \simeq_I (\chi_{\cycl}^{(k-1)/(p+1)})^{\oplus 2}$. In particular, $\bar V_{k,a_p}$ is reducible.
\end{enumerate}
\end{theorem}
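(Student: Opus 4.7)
The plan is to apply Theorem \ref{thm:blz} directly to the pair $(k',a_p')$ (note that hypothesis (b) together with $v_p(a_p')=h$ is exactly the hypothesis \eqref{eqn:blz} of Berger--Li--Zhu) and then split into cases based on whether $\bar V_{k,a_p}$ is reducible or irreducible, using hypothesis (a) to rule out an $I$-isomorphism in each case.

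First, suppose $\bar V_{k,a_p}$ is reducible. Then $s(k,a_p)=0$ by Notation \ref{defn:s(k,ap)}(b), so hypothesis (a) says $k'-1\not\equiv 0\bmod p+1$. Applying Theorem \ref{thm:blz}(a) to $(k',a_p')$ shows $\bar V_{k',a_p'}$ is irreducible, and an irreducible and a reducible representation cannot be $I$-isomorphic. Next, suppose $\bar V_{k,a_p}$ is irreducible, so that $\bar V_{k,a_p}\simeq_I\omega_2^{s}\oplus\omega_2^{ps}$ with $s=s(k,a_p)$ not divisible by $p+1$. If $k'-1\equiv 0\bmod p+1$, then Theorem \ref{thm:blz}(b) applied to $(k',a_p')$ makes $\bar V_{k',a_p'}$ reducible, again precluding an isomorphism. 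Otherwise Theorem \ref{thm:blz}(a) gives $\bar V_{k',a_p'}\simeq_I\omega_2^{k'-1}\oplus\omega_2^{p(k'-1)}$, and an $I$-isomorphism with $\bar V_{k,a_p}$ would force the equality of unordered pairs
\begin{equation*}
\{k'-1,\,p(k'-1)\}\equiv\{s,\,ps\}\pmod{p^2-1}.
\end{equation*}
Reducing modulo $p+1$ and using $p\equiv -1\bmod p+1$, this yields $k'-1\equiv\pm s\bmod p+1$, contradicting hypothesis (a).

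There is no real obstacle: the main content is simply organizing the case analysis and checking that the exponents of $\omega_2$, which are \emph{a priori} only defined modulo $p^2-1$, give a well-defined congruence class of $\pm s(k,a_p)\bmod p+1$ when one compares two irreducible reductions. The slight subtlety worth being careful about is that the ambiguity in Notation \ref{defn:s(k,ap)}(a) (namely $s\leftrightarrow ps$) is exactly the ambiguity that collapses to a single sign $\pm s\bmod p+1$ after reduction, so hypothesis (a) is indeed the correct congruence obstruction to record.
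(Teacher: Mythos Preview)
Your proposal does not prove Theorem \ref{thm:blz} at all; it proves Proposition \ref{prop:see-obstruction} instead. You invoke Theorem \ref{thm:blz} as a tool (``apply Theorem \ref{thm:blz} directly to the pair $(k',a_p')$''), refer to hypotheses (a) and (b) involving $k'$, $a_p'$, and $s(k,a_p)$, and conclude that $\bar V_{k',a_p'}\not\simeq_I\bar V_{k,a_p}$. None of these objects or hypotheses appear in the statement of Theorem \ref{thm:blz}; they are precisely the data and conclusion of Proposition \ref{prop:see-obstruction}.

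Theorem \ref{thm:blz} is an imported result from the literature. In the paper, its proof is simply a citation: one combines the main theorem and Proposition 4.1.4 of Berger--Li--Zhu \cite{BergerLiZhu-SmallSlopes}. There is no self-contained argument to give here short of reproducing substantial $p$-adic Hodge theory from that reference. As a side remark, the argument you wrote is a perfectly good (and slightly more detailed) version of the paper's own proof of Proposition \ref{prop:see-obstruction}; the case split and the reduction of the $\omega_2$-exponents modulo $p+1$ match what the paper does there. But as a proof of the stated theorem it is circular, since it assumes the very result it is meant to establish.
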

\begin{proof}
Combine the main theorem and Proposition 4.1.4 from \cite{BergerLiZhu-SmallSlopes}.
\end{proof}

\begin{proof}[Proof of Proposition \ref{prop:see-obstruction}]
Throughout this proof we fix $k' \geq 2$ and suppose that $v_p(a_p') = h > \floor{{k'-2\over p-1}}$. We will argue depending on whether or not $\bar V_{k,a_p}$ is reducible. (We thank Sandra Rozensztajn for pointing out our argument when $\bar V_{k,a_p}$ is reducible extends to the case when $\bar V_{k,a_p}$ is irreducible.)

First suppose that $\bar V_{k,a_p}$ is reducible. By definition, $s(k,a_p) = 0$. So, the assumption (a) on $k'$ is that $k' - 1 \not\congruent 0 \bmod p+1$. But then the assumption (b)  and Theorem \ref{thm:blz}(a) together imply that $\bar V_{k',a_p'}$ is irreducible. In particular, $\bar V_{k,a_p} \not\simeq_I \bar V_{k',a_p'}$.

Now suppose the $\bar V_{k,a_p}$ is irreducible. If $\bar V_{k',a_p'}$ is reducible we are done. Otherwise $\bar V_{k,a_p}\simeq_I \omega_2^{s(k,a_p)} \oplus \omega_2^{ps(k,a_p)}$ whereas assumption (b) and Theorem \ref{thm:blz} implies that $\bar V_{k',a_p'} \simeq_I \omega_2^{k'-1} \oplus \omega_2^{p(k'-1)}$. Since $k'-1\not\congruent \pm s(k,a_p) \bmod p+1$ (by assumption (a)) we have that $\bar V_{k,a_p} \not\simeq_I \bar V_{k',a_p'}$. This completes the proof.
\end{proof}

For $k \in \Z$, $s \in \Z$, and $h > 0$ we define $X_{k,s,h}$ to be those integers $k'$ such that
\begin{enumerate}[(i)]
\item $k' \congruent k \bmod p-1$,
\item $k'-1 \not\congruent \pm s \bmod p+1$, and
\item $\floor{{k'-2\over p-1}} < h < {k'-2 \over 2}$.
\end{enumerate}
(Condition (iii) implies that $X_{k,s,h}$ is empty for $p=2$ and that $X_{k,s,h}$ has at most one element in it for $p=3$. See Section \ref{sec:optimization} for remarks on $p=2, 3$.) We consider the condition
\begin{equation}\label{eqn:star}
{\ceil{(p-3)h}-1 \over 3(p-1)} \geq 1
\end{equation}
and then define
\begin{equation}\label{eqn:mp(h)}
m_p(h) = \begin{cases}
\log_p\left({\ceil{(p-3)h} - 1 \over 3(p-1)}\right) & \text{if ${\ceil{(p-3)h}-1 \over 3(p-1)} \geq 1$};\\
0 & \text{otherwise.}
\end{cases}
\end{equation}
Here, $\log_p(-)$ means the logarithm with base $p$ ({\em not} the $p$-adic logarithm).
\begin{proposition}\label{prop:counting-prop}
Assume that $p > 3$ and fix any choice of $k$, $s$, and $h$ as above.
\begin{enumerate}
\item If $h$ satisfies \eqref{eqn:star}, then $X_{k,s,h}$ is non-empty.
\item Either $X_{k,s,h}$ is empty, or there exists a $k' \in X_{k,s,h}$ such that 
\begin{equation*}
v_p(k'-k) \geq \floor{m_p(h)}.
\end{equation*}
\end{enumerate}
\end{proposition}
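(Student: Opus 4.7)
My approach is to parametrize the candidates for $k'$ as an arithmetic progression in $\Z$ and then count how many survive both the congruence (ii) and the interval condition (iii). First I would note that if \eqref{eqn:star} fails, then $m_p(h)=0$ and part (b) becomes the tautology ``either $X_{k,s,h}$ is empty or it contains some $k'$ with $v_p(k'-k)\geq 0$,'' while part (a)'s hypothesis is unengaged. So I assume \eqref{eqn:star} throughout, set $m:=\floor{m_p(h)}$ so that $p^m \leq (\ceil{(p-3)h}-1)/(3(p-1))$, and observe that part (a) will follow as a direct consequence of part (b).

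Next I would encode condition (i) by writing $k' = k+j(p-1)$ for $j\in\Z$, which gives $v_p(k'-k)=v_p(j)$ since $\gcd(p,p-1)=1$. Condition (iii) then amounts to $j\in (j_1,j_2]$ for real numbers $j_1,j_2$ with $j_2-j_1=((p-1)\ceil{h}-2h-1)/(p-1)$. Elementary estimates --- $\ceil{h}\geq h$, and $(p-3)\ceil{h}\geq \ceil{(p-3)h}$ since the former is an integer --- bound this length below by $(\ceil{(p-3)h}-1)/(p-1)\geq 3p^m$. A routine floor-function estimate then produces at least three consecutive multiples of $p^m$ in $(j_1,j_2]$, which I label $j_0,\,j_0+p^m,\,j_0+2p^m$. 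Each corresponds to an integer $k'$ satisfying (i), (iii), and $v_p(k'-k)\geq m$.

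The last and most delicate step is to ensure at least one of these three candidates also satisfies (ii). Reducing modulo $p+1$, one has $k'-1\equiv k-1-2j$, so (ii) forbids $2j\equiv k-1\mp s\bmod p+1$. Since $\gcd(2,p+1)=2$, each sign contributes either no constraint or a single forbidden residue class for $j$ modulo $(p+1)/2$, hence at most two classes in total. On the other hand, the three residues $j_0+ip^m$ ($i=0,1,2$) modulo $(p+1)/2$ are pairwise distinct precisely when both $p^m$ and $2p^m$ are nonzero modulo $(p+1)/2$. The first is automatic from $\gcd(p,p+1)=1$, and the second reduces to $(p+1)\nmid 4$, which is exactly the hypothesis $p>3$. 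Thus at least one of the three candidates avoids every forbidden residue. This final congruence bookkeeping is the main obstacle, and it is precisely where the exclusion of $p\leq 3$ is genuinely forced (for $p=3$ one would have $(p+1)/2=2$ and $2p^m\equiv 0\bmod 2$, collapsing our three residues into only two).
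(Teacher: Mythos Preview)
Your proof is correct and follows essentially the same strategy as the paper's: both arguments parametrize the candidates satisfying (i) and (iii) as an arithmetic progression of length at least $3p^m$, extract three candidates spaced $p^m$ apart (guaranteeing $v_p(k'-k)\geq m$), and then argue that at most two of the three can fail condition (ii) when $p>3$. The only cosmetic differences are that you work modulo $(p+1)/2$ on the variable $j$ where the paper works modulo $p+1$ on $k'$, and you locate three consecutive multiples of $p^m$ directly rather than first invoking the paper's Lemma~\ref{lemma:arith-prog} to count all valid $j$'s and then selecting one with $v_p(j)\geq m$; these are equivalent manoeuvres.
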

During the proof of Proposition \ref{prop:counting-prop} we will need the following lemma.
\begin{lemma}\label{lemma:arith-prog}
Fix $k \in \mathbb{Z}$, an integer $n\geq 1$, and a non-empty open interval $I = (x_1,x_2)$ of length $\ell =x_2-x_1$. Then, the length of the arithmetic progression of integers that lie in $I$ and are congruent to $k$ modulo $n$ is at least $\floor{{\ceil{\ell}-1\over n}}$.
\end{lemma}
\begin{proof}
Clear.
\end{proof}
\begin{proof}[Proof of Proposition \ref{prop:counting-prop}]
The proof will show both statements simultaneously. To start, write $Y_{k,h}$ for the set of integers $k'$ satisfying just (i) and (iii) above. Thus $X_{k,s,h} \subset Y_{k,h}$. The set $Y_{k,h}$ is bounded by (iii), so use condition (i) to write
\begin{equation*}
Y_{k,h} = \set{k + m(p-1), k+(m+1)(p-1), \dotsc, k+(m+M-1)(p-1)}
\end{equation*}
for integers $m$ and $M$ with $M \geq 0$. By Lemma \ref{lemma:arith-prog} we can estimate $M$ based on $p$ and $h$. Let us omit that for now and deduce information about $X_{k,s,h}$.

Suppose that we know $M \geq 3p^t$ where $t\geq 0$ is an integer. We claim that this is enough to get a $k' \in X_{k,s,h}$ such that $v_p(k'-k) \geq t$ (in particular $X_{k,s,h}$ is non-empty). To see this, choose $0\leq j \leq p^t-1$ such that $v_p(m+j) \geq t$. Since $M \geq 3p^t$, the elements $k'_i = k + (m+j + ip^t)(p-1)$ for $i=0,1,2$ all lie in $Y_{k,h}$ and satisfy $v_p(k_i'-k)\geq t$. On the other hand, the $k_i'$ are also either consecutive odd or consecutive even integers modulo $p+1$. Since $p > 3$, they must be distinct and so at least one of them is in $X_{k,s,h}$, proving our claim.

Now we return to the assumption that $M \geq 3p^t$. By Lemma \ref{lemma:arith-prog} in fact we have $M \geq \floor{{\ceil{\ell} -1 \over p-1}}$ where $\ell$ is the length of the interval of $k'$ satisfying (iii). Certainly $\ell$ is at least $(p-3)h$, which is the length of the interval of $k'$ satisfying ${k'-2 \over p-1} < h < {k'-2\over 2}$. So, to find $M \geq 3p^t$ we may seek a $t$ such that 
\begin{equation}\label{eqn:final-seek}
\bfloor{{\ceil{(p-3)h}-1 \over p-1}} \overset{?}{\geq} 3p^t.
\end{equation}
Since $t$ is an integer, the floor on the left-hand side of \eqref{eqn:final-seek} can be removed. Dividing by $3$ and taking logarithms, we see that
\begin{equation*}
\floor{m_p(h)} \geq t \implies M \geq 3p^t.
\end{equation*}
So, the previous paragraph applied to $t = \floor{m_p(h)}$ completes the proof.
\end{proof}

\begin{theorem}\label{thm:galois-rep-thm}
Assume that $p > 3$, $k\geq 2$ and $h = v_p(a_p) > 0$ satisfies \eqref{eqn:star}. Then, there exists an integer $k'\geq 2$ such that
\begin{enumerate}
\item $k' \congruent k \bmod (p-1)p^{\floor{m_p(h)}}$
\item $\bar V_{k,a_p} \not\simeq_I \bar V_{k',a_p'}$ for all $v_p(a_p') = h$, and
\item $h < {k'-2\over 2}$.
\end{enumerate}
\end{theorem}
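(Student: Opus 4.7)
The plan is simply to combine Proposition \ref{prop:see-obstruction} with Proposition \ref{prop:counting-prop}, with $s$ chosen to be $s(k,a_p)$ in Notation \ref{defn:s(k,ap)}. The proof should be essentially bookkeeping; there is no remaining analytic obstacle.

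First I would set $s = s(k,a_p)$ and invoke Proposition \ref{prop:counting-prop}(a) for this $s$: the hypothesis \eqref{eqn:star} on $h$ is exactly what is needed to guarantee that $X_{k,s,h}$ is non-empty. Then I would apply Proposition \ref{prop:counting-prop}(b) to pick a $k' \in X_{k,s,h}$ with $v_p(k'-k) \geq \floor{m_p(h)}$.

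Next I would verify the three conclusions. Conclusion (3), namely $h < (k'-2)/2$, is immediate from condition (iii) in the definition of $X_{k,s,h}$. Conclusion (1) follows from $v_p(k'-k) \geq \floor{m_p(h)}$ combined with condition (i) of the definition of $X_{k,s,h}$ (that $k' \equiv k \bmod p-1$); since $\gcd(p-1,p) = 1$, the two congruences can be combined to give
\begin{equation*}
k' \equiv k \bmod (p-1)p^{\floor{m_p(h)}}.
\end{equation*}
For conclusion (2), I would feed $k'$ into Proposition \ref{prop:see-obstruction}: hypothesis (a) of that proposition matches condition (ii) of the definition of $X_{k,s,h}$ (since $s = s(k,a_p)$), and hypothesis (b) is exactly the left half of condition (iii) of $X_{k,s,h}$, namely $\floor{(k'-2)/(p-1)} < h$. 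Proposition \ref{prop:see-obstruction} then yields $\bar V_{k',a_p'} \not\simeq_I \bar V_{k,a_p}$ for every $a_p'$ with $v_p(a_p') = h$, which is conclusion (2).

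The main (and only) thing to be careful about is that the definition of $X_{k,s,h}$ and the hypotheses of Proposition \ref{prop:see-obstruction} line up cleanly once we take $s = s(k,a_p)$, and that the ambiguity in $s(k,a_p)$ (noted in the paragraph after Notation \ref{defn:s(k,ap)}: only $\pm s(k,a_p) \bmod p+1$ is well-defined) is harmless because both the condition defining $X$ and hypothesis (a) of Proposition \ref{prop:see-obstruction} are expressed in terms of $\pm s(k,a_p) \bmod p+1$. Once that is observed the argument is a two-line citation of the previously proved propositions.
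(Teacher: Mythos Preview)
Your proposal is correct and matches the paper's own proof essentially line for line: set $s = s(k,a_p)$, apply Proposition \ref{prop:counting-prop} to extract a $k' \in X_{k,s,h}$ with $v_p(k'-k) \geq \floor{m_p(h)}$, and then read off conclusions (1), (3) from conditions (i), (iii) of $X_{k,s,h}$ while feeding (ii), (iii) into Proposition \ref{prop:see-obstruction} to obtain (2). Your extra remarks (combining the two congruences via $\gcd(p-1,p)=1$, and noting that the $\pm s(k,a_p) \bmod p+1$ ambiguity is harmless) are correct sanity checks that the paper leaves implicit.
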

\begin{proof}
The set $X=X_{k,s(k,a_p),h}$ contains a $k'$ with $v_p(k' - k) \geq \floor{m_p(h)}$ by Proposition \ref{prop:counting-prop}. By conditions (i) and (iii) in the definition of $X$,  we have  $k' \congruent k \bmod (p-1)$ and $h < {k' -2\over 2}$. By conditions (ii) and (iii), and Proposition \ref{prop:see-obstruction}, if $k' \in X$ and $v_p(a_p') = h$,  then $\bar V_{k,a_p}\not\simeq_I \bar V_{k',a_p'}$.
\end{proof}

\begin{remark}\label{rmk:global-consideration}
The role of the hypothesis $h < {k'-2\over 2}$ in the previous theorem is to allow us to transfer between $T_p$-slopes and $U_p$-slopes (compare with Remark \ref{rmk:old-slope} and the proof of Theorem \ref{thm:slope-dependent}).
\end{remark}

\begin{remark}\label{remark:reducibility-into-account}
If we take into account the parity of $k$ and/or the reducibility of $\bar V_{k,a_p}$, then Theorem \ref{thm:galois-rep-thm} can be improved by constants. More specifically, if $\bar V_{k,a_p}$ is reducible then $s(k,a_p) = 0$, so the $\pm s \bmod p+1$ that needs to be avoided in $X_{k,s,h}$ is in fact only one number. Thus in the proof of Proposition \ref{prop:counting-prop} we can replace the instances of $3p^t$ by $2p^t$. 

If we assume that $\bar V_{k,a_p}$ is reducible and also that $k$ is even then we can  further replace $2p^t$ by $p^t$. Indeed, if $k$ is even then the condition $k' -1 \not \congruent 0 \bmod p+1$ is already implied by the condition that $k' \congruent k \bmod p-1$. 

To summarize, if $b=1,2,3$ and we consider the condition
\begin{equation}\label{eqn:star-b}
\tag{\ref{eqn:star}${}_b$}
{\ceil{(p-3)h}-1 \over b(p-1)} \geq 1
\end{equation}
and the function
\begin{equation}\label{eqn:mp(h)-b}
\tag{\ref{eqn:mp(h)}${}_{b}$}
m_p^{(b)}(h) = \begin{cases}
\log_p\left({\ceil{(p-3)h} - 1 \over b(p-1)}\right) & \text{if \eqref{eqn:star-b}};\\
0 & \text{otherwise},
\end{cases}
\end{equation}
then Theorem \ref{thm:galois-rep-thm} is proven with $b = 3$ in general, $b = 2$ when $\bar V_{k,a_p}$ is reducible and $b=1$ when $\bar V_{k,a_p}$ is reducible and $k$ is even.
\end{remark}

\section{Slope dependent bounds}\label{sec:slope-bounds}
The bound for $\CSw(f)$ in Theorem \ref{theorem:L-invariant-in-text} is highly dependent on the particular eigenform $f$. Here we discuss bounds that depend only on the slope of $f$ (what we call slope dependent bounds).

Before giving the positive result, we point out that slope dependent bounds are not a general phenomenon. More precisely, even in a fixed level $N$ there is no single function $m(h)$ such that $m(h) \goto \infty$ as $h \goto \infty$ and $m(h) \leq \CSw(f)$ when $f$ is a $p$-adic eigenform of tame level $N$ and slope $h$.

\begin{example}\label{example:no-slope-bounds}
Consider $p=2$ and $N=1$. Let $\mathcal W_+$ be the even component of $\mathcal W$ and $\mathcal W_+^{<3}$ be defined by $\kappa \in \mathcal W_+$ and $0 < v_p(w_\kappa) < 3$. Write $\mathcal C^{<3}$ for the preimage of the tame level 1, $2$-adic eigencurve above $\mathcal W_+^{<3}$. Buzzard and Kilford proved in \cite{BuzzardKilford-2adc} that $\mathcal C^{<3}$ decomposes into a union $\mathcal C^{<3} = \bigunion_{i} \mathcal C_i$ of irreducible components $\mathcal C_i$ such that ${\kappa}|_{\mathcal C_i}:\mathcal C_i \goto \mathcal W_+^{<3}$ is \'etale and if $x \in \mathcal C_i$, then $v_2(a_2(x)) = i v_2(w_{\kappa(x)})$.  In particular, if $x \in \mathcal C^{<3}$ corresponds to a finite slope $2$-adic eigenform $f_x$, then $\CSw(f_x) = v_p(w_{\kappa(x)})$. So, $\CSw(-)$ is constant on each weight fiber over $\mathcal W_{+}^{<3}$, whereas the list of slopes in each weight fiber is unbounded.
\end{example}

The literature contains more results, always related to the so-called ``spectral halo,'' that can be used to construct similar examples (\cite{Roe-Slopes,Kilford-5Slopes,KilfordMcMurday-7adicslopes,WanXiaoZhang-Slopes,LiuXiaoWan-IntegralEigencurves}).  Further, a conjecture of Pollack and the author (\cite{BergdallPollack-GhostPaperShort}) suggests that slope dependent bounds may never exist for $w_\kappa \nin \Z_p$. Below, we are going to show the converse statement is true. In particular, a slope dependent bound exists if we restrict to classical eigenforms.

In order to prove our result we need to discuss Galois representations, especially over eigencurves. Write $G_{\Q} = \Gal(\bar \Q/\Q)$ for the absolute Galois group of $\bar \Q$ and identify $G_{\Q_p}=\Gal(\bar \Q_p/\Q_p)$ with a decomposition group at $p$ inside $G_{\Q}$. As in the previous section, we write $I \subset G_{\Q_p}$ for the inertia subgroup.

Suppose that $g \in M_k(\Gamma_1(N))$ is an eigenform of weight $k\geq 2$ and let $\psi_g$ be its nebentype character. Then, there is a two-dimensional semi-simple and continuous representation $V_p(g)$ of $G_{\Q}$, with coefficients in $\bar \Q_p$, such that:
\begin{enumerate}
\item If $\ell \ndvd Np$, then $V_p(g)$ is unramified at $\ell$ and the characteristic polynomial of a geometric Frobenius element at $\ell$ is
\begin{equation*}
X^2 - a_\ell(g)X + \psi_g(\ell)\ell^{k-1}.
\end{equation*}
\item ${V_p(g)}|_{G_{\Q_p}}$ is crystalline. The vector space $D_{\crys}(V_p(g))$ has Hodge filtration with weights $0 < k-1$ and the crystalline Frobenius $\varphi$ on $D_{\crys}(V_p(g))$ is non-scalar with characteristic polynomial
\begin{equation*}
X^2 - a_p(g) X + \psi_g(p)p^{k-1}.
\end{equation*}
\end{enumerate}
The representations $V_p(g)$ are  completely determined by (a). For property (b), see \cite[Theorem 1.2.4(ii)]{Scholl-MotivesForModularForms} and \cite[Theorem 3.1]{ColemanEdixhoven-Semisimplicity}. If $v_p(a_p(g)) > 0$, then ${V_p(g)}|_{G_{\Q_p}}$ is irreducible and so one of the $V_{k,a_p}$ up to a twist. It is possible to make this more specific (see \eqref{eqn:modular-answer} in the next proof).
\begin{lemma}\label{lemma:inertial-restriction}
Suppose that $g \in M_k(\Gamma_1(N))$ is an eigenform of weight $k\geq 2$ and $v_p(a_p(g)) > 0$. Then there exists an $a_p \in \bar \Q_p$ such that $v_p(a_p) = v_p(a_p(g))$ and $(\bar V_p(g)|_{G_{\Q_p}})^{\ssfy} \simeq_I \bar V_{k,a_p}$.
\end{lemma}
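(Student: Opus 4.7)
The plan is to realize $V_p(g)|_{G_{\Q_p}}$ as the tensor product of some $V_{k,a_p}$ with an unramified character; the lemma then follows because unramified characters restrict trivially to $I$, and twisting by a character commutes with reduction mod $p$ and with semisimplification.

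Since $p \ndvd N$, the nebentype $\psi_g$ is unramified at $p$, and $\psi_g(p)$ is a root of unity, hence a $p$-adic unit. I would fix a square root $u \in \bar \Q_p^\times$ of $\psi_g(p)$---so $v_p(u) = 0$---and let $\eta: G_{\Q_p} \to \bar \Q_p^\times$ be the unramified character determined by $u$. Then I would set $a_p := a_p(g)/u$, so that $v_p(a_p) = v_p(a_p(g)) > 0$, as required.

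The next step is to verify that $W := V_p(g)|_{G_{\Q_p}} \otimes \eta^{-1}$ is crystalline, with Hodge--Tate weights $0, k-1$ and non-scalar crystalline Frobenius having characteristic polynomial $X^2 - a_p X + p^{k-1}$. The Hodge filtration is unaffected by an unramified twist (since $\eta$ has weight $0$), and on $D_{\crys}$ the twist rescales both Frobenius eigenvalues by a power of $u^{\pm 1}$, converting $X^2 - a_p(g) X + \psi_g(p) p^{k-1}$ into $X^2 - a_p X + p^{k-1}$ via $u^2 = \psi_g(p)$. Non-scalarness is preserved, and because $v_p(a_p) > 0$ the two Frobenius eigenvalues have distinct valuations, forcing $W$ to be irreducible. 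The classification recalled at the start of Section \ref{section:crys-lifts} then identifies $W \cong V_{k,a_p}$, giving $V_p(g)|_{G_{\Q_p}} \cong V_{k,a_p} \otimes \eta$.

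For the final step, I would pick any $G_{\Q_p}$-stable lattice in $V_p(g)|_{G_{\Q_p}}$, reduce mod $p$, and semisimplify; twisting by a one-dimensional character commutes with each of these operations, and $\bar \eta|_I$ is trivial since $\eta$ is unramified. The isomorphism $V_p(g)|_{G_{\Q_p}} \cong V_{k,a_p} \otimes \eta$ therefore descends to $(\bar V_p(g)|_{G_{\Q_p}})^{\ssfy} \simeq_I \bar V_{k,a_p}$. No real obstacle arises; the only point to track carefully is that $\psi_g(p)$ is a $p$-adic unit, so absorbing it via an unramified twist does not disturb the slope.
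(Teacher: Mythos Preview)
Your argument is correct and is essentially the paper's own proof: both identify $V_p(g)|_{G_{\Q_p}}$ as an unramified twist of some $V_{k,a_p}$ (the paper simply quotes \cite[Th\'eor\`eme~6.5]{Breuil-SomeRepresentations2} for this, whereas you recompute the effect of the twist on $D_{\crys}$ by hand), and then restrict to inertia. One small imprecision worth flagging: your claim that $v_p(a_p)>0$ forces the two Frobenius eigenvalues to have \emph{distinct} valuations is false when $v_p(a_p)\geq (k-1)/2$; the relevant (and correct) observation is that neither eigenvalue is a unit, which is what excludes the reducible (ordinary) case and is in any event built into the classification of the $V_{k,a_p}$ that you invoke.
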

\begin{proof}
View the Dirichlet character $\psi_g$ instead as a character on $G_{\Q}$ using class field theory. Then by \cite[Th\'eor\`eme 6.5]{Breuil-SomeRepresentations2} where exists an unramified (because $N$ is prime to $p$) character $\psi_g^{1/2}$ of $G_{\Q_p}$ whose square is $\psi_g|_{G_{\Q_p}}$ and such that
\begin{equation}\label{eqn:modular-answer}
\restrict{V_p(g)}{G_{\Q_p}} \simeq V_{k,a_p(g)\psi_g(p)^{1/2}} \otimes \psi_g^{1/2}.
\end{equation}
In particular, this implies the result with $a_p = a_p(g)\psi_g(p)^{1/2}$ (note that $\psi_g$ takes values in the roots of unity).
\end{proof}

The construction of Galois representations extends $p$-adic analytically to eigencurves. Specifically, for each $x \in \mathcal C_N$ there exists a two-dimensional semi-simple and continuous representation $V_x$ of $G_{\Q}$ that is unramified away from $Np$ and such that a geometric Frobenius element at $\ell \ndvd Np$ acts with characteristic polynomial
\begin{equation*}
X^2 - a_\ell(x) X + \langle \ell \rangle(x) \ell^{-1}\kappa(x)(\ell).
\end{equation*}
This uniquely determines $V_x$ as before. For instance, if $f \in M_k(\Gamma)$ is a stabilized eigenform $f=g_\alpha$, then we have $V_{x_f} = V_p(g)$.

For a moment, consider any continuous representation $V$ of $G_{\Q}$ over $\bar \Q_p$. We use, as in Section \ref{section:crys-lifts}, the notation $\bar V$ to denotes its semi-simplification modulo $p$. It is a fact (see \cite{AshStevens-Duke} for example) that for a fixed $N$ there are only finitely many representations of the form $\bar V_x$ as $x$ runs over points of $\mathcal C_N$. In fact, $\mathcal C_N$ decomposes into a finite union
\begin{equation*}
\mathcal C_N = \bigunion_{\bar V} \mathcal C_N(\bar V)
\end{equation*}
of open and closed rigid analytic subspaces $\mathcal C_N(\bar V)$ characterized as those points $x$ such that $\bar V_x \simeq \bar V$. (The $\mathcal C_N(\bar V)$ are not necessarily irreducible though.)

\begin{lemma}\label{lemma:constancy}
If $U$ is a $p$-adic family (passing through some point on $\mathcal C_N$), then $u\mapsto \bar V_u$ is constant on $U$.
\end{lemma}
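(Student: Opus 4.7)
The plan is to exploit the decomposition $\mathcal C_N = \bigsqcup_{\bar V} \mathcal C_N(\bar V)$ recalled just before the lemma, together with the fact that a $p$-adic family is irreducible by definition.

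First I would recall that each $\mathcal C_N(\bar V)$ is both open and closed in $\mathcal C_N$, and that there are only finitely many nonempty ones. In particular, the family $\{\mathcal C_N(\bar V)\}$ forms a finite admissible disjoint clopen cover of $\mathcal C_N$, so any connected rigid analytic subspace of $\mathcal C_N$ must be contained entirely in a single $\mathcal C_N(\bar V)$.

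Next I would invoke the hypothesis: $U$ is a $p$-adic family, meaning (per our definition) that $U$ is an irreducible component of an affinoid neighborhood of some $x_f \in \mathcal C_N$. An irreducible rigid analytic space is connected, so $U$ is connected. Hence $U \subset \mathcal C_N(\bar V)$ for exactly one $\bar V$, and by the very characterization of $\mathcal C_N(\bar V)$ we get $\bar V_u \simeq \bar V$ for every $u \in U$, which is the constancy statement.

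There is really no obstacle here beyond citing the clopen decomposition correctly; the main point is that the result is a formal consequence of the existence of the stratification $\mathcal C_N = \bigsqcup_{\bar V} \mathcal C_N(\bar V)$ combined with the irreducibility (hence connectedness) of a $p$-adic family. I would keep the proof to two or three sentences.
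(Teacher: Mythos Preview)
Your proposal is correct and follows essentially the same approach as the paper: both arguments use that $U$, being irreducible, must land in a single piece of the finite clopen decomposition $\mathcal C_N = \bigcup_{\bar V} \mathcal C_N(\bar V)$. The only cosmetic difference is that the paper first invokes \cite[Corollary 2.2.9]{Conrad-IrredComponents} to place $U$ inside a unique irreducible (hence connected) component of $\mathcal C_N$ before concluding, whereas you argue directly from the connectedness of $U$ itself; your shortcut is perfectly valid.
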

\begin{proof}
By \cite[Corollary 2.2.9]{Conrad-IrredComponents} the family $U$ is contained in a unique irreducible, and thus unique connected, component of $\mathcal C_N$. Thus $U$ is completely contained in one of the $\mathcal C_N(\bar V)$ appearing in the above discussion.
\end{proof}

Now we can prove the main result of this section. Recall that we defined $m_p(h)$ in \eqref{eqn:mp(h)} and it includes the condition \eqref{eqn:star} on $h$. Also recall  the ``$k$-normalized'' constant slope radius $\CSk(-)$ from Definition \ref{defn:CSk}.

\begin{theorem}\label{thm:slope-dependent}
Assume that $p > 3$, $f\in M_{\kappa}^{\dagger}(\Gamma)$ satisfies \eqref{hyp:et}, and $w_{\kappa} \in \Z_p$. Set $h := v_p(a_p(f))$. Then, $\floor{m_p(h)} \leq \CSk(f)$.
\end{theorem}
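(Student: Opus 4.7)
The claim is trivial if $\floor{m_p(h)} = 0$, so the plan is to assume $\floor{m_p(h)} \geq 1$ and argue by contradiction using Theorem \ref{thm:galois-rep-thm}. By Definition \ref{defn:CSk}, the negation $\CSk(f) < \floor{m_p(h)}$ is equivalent to the existence of a constant slope family $U$ through $f$, with domain $W := \kappa(U)$, satisfying $m_\kappa(W) < \floor{m_p(h)} + 1 + v_p(2)$. The goal is to exhibit an integer classical weight $k'$ sitting inside $W$ at which the specialization of $U$ produces a classical eigenform whose mod-$p$ Galois representation violates the non-lifting statement in Theorem \ref{thm:galois-rep-thm}.

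The hypothesis $w_\kappa \in \Z_p$ (equivalently $w_\kappa \in p\Z_p$) makes $\set{w_{k_0} : k_0 \in \Z}$ dense near $w_\kappa$, so first I would pick an integer $k_0$ in the component of $\mathcal W$ containing $\kappa$ with $h < (k_0-2)/2$ and $v_p(w_{k_0} - w_\kappa) \geq \floor{m_p(h)} + 1 + v_p(2)$. Then $k_0 \in W$, and Coleman's classicality theorem (valid because $h < k_0 - 1$) identifies the specialization of $U$ at $k_0$ with a classical eigenform in $M_{k_0}(\Gamma)$; the bound $h < (k_0-2)/2$ rules out $p$-newness, so this specialization is a stabilization of some $g \in M_{k_0}(\Gamma_1(N))$ with $v_p(a_p(g)) = h$. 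Lemma \ref{lemma:inertial-restriction} then furnishes $a_p^0 \in \bar\Q_p$ with $v_p(a_p^0) = h$ and $\bar V_{x_f}|_I \simeq_I \bar V_{k_0,a_p^0}|_I$, where the transfer along $U$ uses Lemma \ref{lemma:constancy}.

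Applying Theorem \ref{thm:galois-rep-thm} to the pair $(k_0, a_p^0)$ yields an integer $k' \geq 2$ with $k' \congruent k_0 \bmod (p-1)p^{\floor{m_p(h)}}$, $h < (k'-2)/2$, and $\bar V_{k',a_p'} \not\simeq_I \bar V_{k_0,a_p^0}$ for every $a_p'$ of valuation $h$. Remark \ref{rmk:variable-change} and the ultrametric inequality give
\begin{equation*}
v_p(w_{k'} - w_\kappa) \geq \min\bigl(v_p(w_{k'} - w_{k_0}),\, v_p(w_{k_0} - w_\kappa)\bigr) \geq \floor{m_p(h)} + 1 + v_p(2) > m_\kappa(W),
\end{equation*}
so $k' \in W$. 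The specialization of $U$ at $k'$ is then again a classical stabilization by the same Coleman-plus-$p$-new argument, and Lemma \ref{lemma:inertial-restriction} provides an $a_p'$ with $v_p(a_p') = h$ and $\bar V_{k',a_p'}|_I \simeq_I \bar V_{x_f}|_I$. But Lemma \ref{lemma:constancy} forces the right-hand side to be $\simeq_I \bar V_{k_0,a_p^0}|_I$, contradicting Theorem \ref{thm:galois-rep-thm}.

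The delicate step is the coordinate bookkeeping between the integer weight $k$ used in Theorem \ref{thm:galois-rep-thm} and the disk parameter $w_\kappa$ used to measure $\CSw(f)$; the discrepancy is exactly the $1 + v_p(2)$ shift recorded in Remark \ref{rmk:variable-change} and absorbed into Definition \ref{defn:CSk}. The assumption $w_\kappa \in \Z_p$ enters precisely because the whole scheme pivots on placing an integer weight $k_0$ inside $W$ with $v_p(w_{k_0} - w_\kappa)$ as large as the argument demands.
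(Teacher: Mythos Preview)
Your proof is correct and follows essentially the same strategy as the paper: reduce to a nearby classical weight using $w_\kappa \in \Z_p$, invoke Coleman's classicality theorem and Lemma \ref{lemma:inertial-restriction} to access the local mod $p$ representation, apply Theorem \ref{thm:galois-rep-thm} to produce a forbidden weight $k'$, and derive a contradiction via Lemma \ref{lemma:constancy}. The only organizational difference is that the paper phrases the reduction as ``we may assume $f = g_\alpha$'' whereas you keep $f$ fixed and introduce an auxiliary integer weight $k_0$; your formulation makes the coordinate bookkeeping between $k$- and $w$-radii (the $1 + v_p(2)$ shift) more explicit, which the paper leaves implicit in the final step.
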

\begin{proof}
If $h$ does not satisfy \eqref{eqn:star} then $m_p(h) = 0$ so the result is trivial. Thus in the proof we will assume that \eqref{eqn:star} is satisfied.

Consider a constant slope $p$-adic family $U$ passing through $f$. Since $w_\kappa \in \Z_p$, there are arbitrarily large integers $k$ such that $k$ and $\kappa$ have non-empty weight fibers in $U$ and $h < {k-2\over 2}$. By Coleman's classicality theorem (\cite[Theorem 6.1]{Coleman-ClassicalandOverconvergent}) the weight $k$ point in $U$ is classical. In fact, this point is necessarily a stabilization $g_\alpha$ of an eigenform $g$ of level $\Gamma_1(N)$ and such that  $0 < h = v_p(\alpha) = v_p(a_p(g)) < {k-2\over 2}$ (by Remark \ref{rmk:old-slope}). Taking $k$ approaching to $\kappa$, we may assume that $f=g_\alpha$. 

By Lemma \ref{lemma:inertial-restriction} we may choose $a_p$ such that $v_p(a_p) = h$ and
\begin{equation}\label{eqn:what-i-want}
(\bar V_{x_f}|_{G_{\Q_p}})^{\ssfy} \simeq_I \bar V_{k,a_p}
\end{equation}
Since $h$ satisfies \eqref{eqn:star}, Theorem \ref{thm:galois-rep-thm} now guarantees the existence of an integer $k' \geq 2$ such that 
\begin{enumerate}
\item $k'\congruent k \bmod (p-1)p^{\floor{m_p(h)}}$,
\item $h < {k'-2\over 2}$, and
\item $\bar V_{k,a_p} \not\simeq_I \bar V_{k',a_p'}$ for any choice of $v(a_p') = h$.
\end{enumerate}
We claim that $U$ cannot pass over the weight $k'$. Suppose it did. Then, (b) would imply that above the weight $k'$ we have an eigenform $f'$ of the form $f' = g'_{\alpha'}$ with $g' \in M_k(\Gamma_1(N))$ (using the argument via Coleman's theorem again). Furthermore, $0 < h = v_p(a_p(g')) = v_p(\alpha')$, so by Lemma \ref{lemma:inertial-restriction} we know there exists an $a_p'$ with $v(a_p') = h$ such that \eqref{eqn:what-i-want} holds with $f$ replaced by $f'$, $k$ replaced by $k'$, and $a_p$ replaced by $a_p'$. Now property (c) in our choice of $k'$ implies that
\begin{equation}
(\bar V_{x_f}|_{G_{\Q_p}})^{\ssfy}\not\simeq_I (\bar V_{x_{f'}}|_{G_{\Q_p}})^{\ssfy},
\end{equation}
which contradicts Lemma \ref{lemma:constancy}. So, $U$ does not pass over the weight $k'$, completing the proof.
\end{proof}

\begin{remark}
If $p=2$ then to get classical weights $k$ approaching $\kappa$ we would need to assume $w_\kappa \in 4\Z_2-8\Z_2$ if $\kappa$ is an odd character and $w_\kappa \in 8\Z_2$ if $\kappa$ is an even character. (We ignored this because we needed $p>3$ anyways.)
\end{remark}

\begin{remark}\label{rmk:exclusionary-proof}
The proof of Theorem \ref{thm:slope-dependent} is exclusionary in the following way. If $f \in M_k(\Gamma)$ is an eigenform of slope $h$ then what we showed is that (for $h$ large enough) there is some weight $k' \congruent k \bmod (p-1)p^{\floor{m_p(h)}}$ such that any constant slope $p$-adic family passing through $f$ must omit the weight $k'$. In particular, if we are concerned only with integer $p$-adic weights, then the furthest a constant slope $p$-adic family through $f$ can extend is to weights $k' \congruent k \bmod (p-1)p^{\floor{m_p(h)}+1}$.
\end{remark}

\begin{remark}
This article has used ``classical'' to mean level $\Gamma_1(N) \cap \Gamma_0(p)$, though classical eigenforms of level $\Gamma_1(Np)$ are also overconvergent $p$-adic eigenforms, just of a slightly modified weight. More specifically, if $\chi$ is the $p$-part of the nebentype character (seen as a finite order character on $\Z_p^\times$) of an eigenform of weight $k$ and level $\Gamma_1(Np)$ then the corresponding $p$-adic weight is $\kappa(z) = z^k\chi(z)$. Since $\chi$ is trivial on the infinite pro-cyclic part of $\Z_p^\times$, we see $w_\kappa = w_k \in \Z$. Thus, Theorem \ref{thm:slope-dependent} applies to such eigenforms (of non-critical slope) as well. On the other hand, Theorem \ref{thm:slope-dependent} does not apply the finite slope classical eigenforms of level $\Gamma_1(Np^r)$, with $r \geq 2$, because the $p$-adic weights of such forms are not in the closure of $\Z$ (cf.\ Example \ref{example:no-slope-bounds}).
\end{remark}

\section{Optimizing slope dependent bounds}\label{sec:optimization}

The slope dependent bound we proved in Theorem \ref{thm:slope-dependent} is not optimal. It is based on the analysis in Section \ref{section:crys-lifts} for which there may be improved statements. For instance, Remark \ref{remark:reducibility-into-account} already provides some conditions to improve the bound by a constant factor. 

It is also not out of the question that, in Theorem   \ref{thm:blz}, the condition \eqref{eqn:blz} can be replaced by
\begin{equation}\label{eqn:modified-blz}
\tag{\ref{eqn:blz}*} 
\bfloor{{k-1\over p+1}} < v_p(a_p).
\end{equation}
See, for instance, the remarks following \cite[Conjecture 2.1.1]{BuzzardGee-Slopes} and the data in \cite[Section 6]{Rozensztajn-2dlocus}. Recent joint work with Levin (\cite{BergdallLevin-BLZ}) also aims to improve Theorem \ref{thm:blz}, providing a result between \eqref{eqn:modified-blz} and \eqref{eqn:blz}.

Assuming \eqref{eqn:modified-blz}, the arguments in Section \ref{section:crys-lifts} go through without change for $p>3$, and Theorem \ref{thm:galois-rep-thm} can be altered by replacing $m_p(h)$ by
\begin{equation}\label{eqn:mp(h)star}
\tag{\ref{eqn:mp(h)}*}
m_p^{\ast}(h) = \begin{cases}
\log_p\left({\ceil{(p-1)h} - 2 \over 3(p-1)}\right) & \text{if ${\ceil{(p-1)h} - 2 \over 3(p-1)} \geq 1$;}\\
0 & \text{otherwise.}
\end{cases}
\end{equation}
The same trichotomy as in Remark \ref{remark:reducibility-into-account} also applies.

One benefit of improving Theorem \ref{thm:blz} would be to soundly include the cases $p=2$ and $p=3$ where the numerical data is easier to come by. To get an analog of Theorem \ref{thm:galois-rep-thm} in those cases, some adjustments to Proposition \ref{prop:counting-prop} need to be made (since $p>3$ is used in the middle of that proof). We leave that to the reader. We do observe, though, that the proof in Proposition \ref{prop:counting-prop} does go through for $p=2,3$ as long as $k$ is even and $\bar V_{k,a_p}$ is reducible (by Remark \ref{remark:reducibility-into-account}).

As an example of this discussion, assume that we can replace \eqref{eqn:blz} in Theorem \ref{thm:blz} with \eqref{eqn:modified-blz}. Then, assume that $f$ is an regular eigenform of non-critical slope $h$, even weight, and such that ${\bar V_{x_f}}|_{G_{\Q_p}}$ is reducible. Some basic estimates then give
\begin{equation}\label{eqn:optimal-implication}
h \in \Z_{\geq 3} \implies \begin{cases}
\floor{\log_p(h-1)} \leq \CSk(f) & \text{ if $p\geq 3$;}\\
\floor{\log_p(h-2)} \leq \CSk(f) & \text{ if $p=2$,}
\end{cases}
\end{equation}
improving Theorem \ref{thm:slope-dependent}.

\begin{remark}
An examination of the proof of Theorem \ref{thm:slope-dependent} shows that to deduce \eqref{eqn:optimal-implication} we could also assume the precise global conjecture that would follow from an affirmative answer to \cite[Question 4.9]{Buzzard-SlopeQuestions} (see the text surrounding \cite[Conjecture 2.1.1]{BuzzardGee-Slopes} as well).
\end{remark}
 
 \section{Comparison to Gouv\^ea--Mazur-like quantities}\label{section:gouvea-mazur-comparison}
 
 The purpose of this final section is to discuss the relationship between our results and a former conjecture of Gouv\^ea and Mazur (\cite{GouveaMazur-FamiliesEigenforms}). The Gouv\^ea--Mazur conjecture was shown to be false by Buzzard and Calegari (\cite{BuzzardCalegari-GouveaMazur}), but it remains open what kind of salvage is possible. We indicated earlier that the lower bound in Theorem \ref{theorem:L-invariant-in-text} is heuristically close to the behavior suggested in \cite{GouveaMazur-FamiliesEigenforms}, so it remains to determine how closely.
 
 Let us recall the setup of the conjecture in \cite{GouveaMazur-FamiliesEigenforms} now. The quantity we consider is $d(k,h)$, which is the multiplicity of $h$ as the $p$-adic valuation of an eigenvalue for the $U_p$-operator acting on $M_k(\Gamma)$. Coleman showed (\cite{Coleman-pAdicBanachSpaces}) that there is some function $\GM(h)$ such that if $k \congruent k' \bmod (p-1)p^{\GM(h)}$ then $d(k,h) = d(k',h)$. It was proven by Wan (\cite{Wan-GouveaMazur}) that one could assume that $\GM(h) = O(h^2)$ with implicit constants depending on $N$ and $p$. The original conjecture of Gouv\^ea--Mazur was that $\GM(h) = \ceil{h}$ is sufficient. This is what is shown to be false in \cite{BuzzardCalegari-GouveaMazur}.
 
We note some finiteness properties of the function $\GM(h)$ are not available for the function $\CS^k(-)$. Specifically, even if we fix the slope $h$ then the maximum value of $\CS^k(f)$, ranging over all classical eigenforms of slope $h$, may be $\infty$ in the presence of ramification of the weight map in slope $h$. However, we will ignore that and proceed to present evidence that one {\em might} want to confuse $\GM(h)$ and constant slope radii. Or, at least, we will compare our lower bounds with plausible values for $\GM(h)$ while restricting to the special, but important, case of $p$-new forms. 

For most of the rest of this section we let $p=5$ and $N=1$. If $k\geq 2$ is an even integer and $f \in S_k(\Gamma_0(5))$ is a newform, then Theorems \ref{theorem:L-invariant-normalization} and \ref{thm:slope-dependent} provide obstructions to the existence of a constant slope $5$-adic family passing through $f$ in terms of $\mathscr L_f$ and the slope $h_k = {k-2\over 2}$. Taking into account Remark \ref{rmk:exclusionary-proof} and Section \ref{sec:optimization} we have that, for $k\geq 8$, a constant slope family through $f$ should only  exist over $k' \congruent k \bmod {4\cdot 5^{m_5(f,h_k)}}$ where
\begin{equation}\label{eqn:cong-obstruction}
m_5(f,h_k) = \max(\floor{\log_5(h_k-1)} + 1, \ceil{-v_5(\mathscr L_f)}).
\end{equation}
(Note:\ All of the mod $5$ Galois representations at level $\Gamma_0(5)$ are reducible, even globally.) Now we need some data on the $m_5(f,h_k)$'s, given in Table \ref{table:5adic-gouvea-mazur-example}, that was provided to us by Robert Pollack (as discussed in Example \ref{example:single-L-invariant-example}).

\begin{small}
\begin{table}[htp]
\renewcommand{\arraystretch}{1.1}
\caption{Lists of $5$-adic congruence obstructions $m_5(f,h_k)$ to constant slope $5$-adic families through newforms of given weight $k$.}
\begin{center}
\begin{tabular}{|c|l|l|}
\hline
$k$ & List of $v_5(\mathscr L_f)$'s & List of $m_5(f,h_k)$'s\\
\hline
8 &  $0, -2, -2$ & $1,2,2$\\
10 & $2, -2, -2$ & $1,2,2$\\
12 & $-1, -2, -2$ & $1,2,2$\\
14 & $-1, -2, -2, -4, -4$ & $2,2,2,4,4$\\
16 & $-1, -3, -3, -4, -4$ & $2,3,3,4,4$\\
18 & $-1, -2, -2, -4, -4$ & $2,2,2,4,4$\\
20 & $-1, -2, -2, -5, -5, -6, -6$ & $2,2,2,5,5,6,6$\\
22 & $-1, -2, -2, -5, -5, -6, -6$ & $2,2,2,5,5,6,6$\\
24 & $-1, -2, -2, -4, -4, -7, -7$ & $2,2,2,4,4,7,7$\\
26 & $-1, -3, -3, -4, -4, -7, -7, -8, -8$ & $2,3,3,4,4,7,7,8,8$\\
\hline
\end{tabular}
\end{center}
\label{table:5adic-gouvea-mazur-example}
\end{table}%
\end{small}

As an example of what to do with this, suppose that $k=16$. The list of $v_5(\mathscr L_f)$'s in that case is $\set{-1,-3,-3,-4,-4}$ and so the list of $m_5(f,7)$'s is $\set{2,3,3,4,4}$. This means, at the very least, that no constant slope $7$ family should exist over all weights $k' \congruent 16 \bmod 4\cdot 5$. Further, if the $m_5(f,h_k)$'s are exactly the obstruction to families existing, we expect one family over weights $k' \congruent 16 \bmod {4 \cdot 5^2}$, two more over weights $k' \congruent 16 \bmod 4\cdot 5^3$, and finally two more still over $k' \congruent 16 \bmod 4\cdot 5^4$. To see if this is plausible, we give the slopes of the $U_5$-operator acting in weights near to 16 in Table \ref{table:U5-slopes}. 
\begin{small}
\begin{table}[htp]
\renewcommand{\arraystretch}{1.1}
\caption{Slopes of $U_5$-operator at weights $k$ near to $16$.}
\begin{center}
\begin{tabular}{|c|l|c|}
\hline
$k$ & Slopes of $U_5$ acting on $S_k(\Gamma_0(5))$ & $d(k,7)$\\
\hline
20 & $1, 9, 9, \dotsc$ & $0$\\ 
36 & $1, 4, 5, 17, 17, \dotsc$ & $0$\\
116 & $1, 5, 6, 7, 8, 9, 14, \dotsc$ & $1$\\
516$^\ast$ & $1, 6, 7, 7, 7, 8, 14, 15, \dotsc$ & $3$\\
2516$^\ast$ & $ 1, 7, 7, 7, 7, 7, 14, 15, \dotsc$ & $5$\\
12516$^\ast$ & $ 1, 7, 7, 7, 7, 7, 14, 15, \dotsc$ & $5$\\
\hline
\end{tabular}
\end{center}
\label{table:U5-slopes}
\end{table}%
\end{small}

As far as we know, this is as close as a human can come to ``seeing'' constant slope families. So, the data indicates a strong link between the $m_5(f,7)$'s and the corresponding $5$-adic families passing through the $f$'s. The $\ast$'s appearing in Table \ref{table:U5-slopes} indicate that we used work of Lauder (\cite{Lauder-Computations}) to calculate in the corresponding weights, as opposed to using in-built {\tt sage} commands to calculate actual spaces of cuspforms, and that we did not make the calculation provably correct. (This would have involved making some constants effective, but possibly increasing the length of the computation.) Using Lauder's work is crucial to make further calculation since, as best as we can tell, the largest value of $-v_p(\mathscr L_f)$ in weight $k$ is linear in $k$, meaning that we must be prepared to consider weights on the order of $k + (p-1)p^{k}$. In fact we compiled data as in Table \ref{table:U5-slopes} for each $8 \leq k \leq 26$ and summarized it in Table \ref{table:gouvea-mazur-quantities}. For notation, if $j\geq 0$ we define $d_j(k) = d(k+(p-1)p^j, h_k)$.

\begin{small}
\begin{table}[htp]
\renewcommand{\arraystretch}{1.1}
\setlength{\tabcolsep}{4pt}
\caption{Multiplicity $d_j(k)$ of the slope $h_k={k-2\over 2}$ at weight $k + 4\cdot 5^j$.}
\begin{center}
\begin{tabular}{|c|c|c|c|c|c|c|c|c|c|c|}
\hline
$k$ & $d_0(k)$ & $d_1(k)$ & $d_2(k)$ & $d_3(k)^{\ast}$ & $d_4(k)^{\ast}$ & $d_5(k)^{\ast}$& $d_6(k)^{\ast}$& $d_7(k)^{\ast}$& $d_8(k)^{\ast}$& $d_9(k)^{\ast}$\\
\hline
$8$ & 0 & 1 & 3 & 3 & 3 & 3 & 3 & 3 & 3 & 3\\
$10$ & 0 & 1 & 3 & 3 & 3 & 3 & 3 & 3 & 3 & 3\\
$12$ & 0 & 0 & 3 & 3 & 3 & 3 & 3 & 3 & 3 & 3\\
$14$ & 0 & 0 & 3 & 3 & 5 & 5 & 5 & 5 & 5 & 5\\
$16$ & 0 & 0 & 1 & 3 & 5 & 5 & 5 & 5 & 5 & 5\\
$18$ & 0 & 0 & 3 & 3 & 5 & 5 & 5 & 5 & 5 & 5\\
$20$ & 0 & 0 & 3 & 3 & 3 & 5 & 7 & 7 & 7 & 7\\
$22$ & 0 & 0 & 3 & 3 & 3 & 5 & 7 & 7 & 7 & 7\\
$24$ & 0 & 0 & 3 & 3 & 5 & 5 & 5 & 7 & 7 & 7\\
$26$ & 0 & 0 & 1 & 3 & 5 & 5 & 5 & 7 & 9 & 9\\
\hline
\end{tabular}
\end{center}
\label{table:gouvea-mazur-quantities}
\end{table}%
\end{small}

The link we are after is clear now:\ Tables \ref{table:5adic-gouvea-mazur-example} and \ref{table:gouvea-mazur-quantities} show that, except for the case $k=12$, the list of $m_5(f,h_k)$'s is exactly the list of $j$ where $d_j(k) > d_{j-1}(k)$ {\em with} multiplicity counted by $d_j(k) - d_{j-1}(k)$. This numerical correlation was replicated in all the cases where we had access to the $\mathscr L$-invariants. One could also replace the function $d_j(k)$ by a more robust function like the minimum of $d(k+u(p-1)p^j, h_k)$ where $u=1,2,\dotsc,p-1$. We did that as well, but there seemed to be no difference in the data.

We have not found a theoretical reason why the case $k=12$ is slightly off. The same thing (being off ``by one") happened for $k=52$ and not again in our test range ($k\leq 142$). We note, however, that the disagreement in the data is occurring only for the single supposedly {\em largest} of the $5$-adic families through newforms of weights $12$, $52$, etc.\ and the size of the family is controlled by the logarithmic term in \eqref{eqn:cong-obstruction}.

We now release the assumption that $p=5$ and $N=1$. The numerics detailed above deserve expanding upon. Based on the data we have available, we can offer a question that now seems likely to have an affirmative answer.

\begin{question}\label{question:final-q}
Does there exist a non-negative function $\twid m_p(h)$ that grows like $\log h$ as $h \goto \infty$ and satisfies the following property? 

\begin{quote}
If $f \in S_k(\Gamma)$ is a $p$-new eigenform then define
\begin{equation*}
\twid m_p(f) = \max(\floor{\twid m_p(h)}, \ceil{-v_p(\mathscr L_f)}).
\end{equation*}
Then, the list of $\twid m_p(f)$'s as $f$ ranges over $p$-new eigenforms in $S_k(\Gamma)$ is equal to the list of $j$ such that $d_j(k) > d_{j-1}(k)$ with multiplicity counted by $d_j(k) - d_{j-1}(k)$.
\end{quote}
\end{question}
One can also ask the same question without the assumption that $f$ is $p$-new, replacing the $\mathscr L$-invariant by the logarithmic derivative of $a_p$. We do not yet have a strong feeling on that because data in this case is much harder to come by.

It seems unlikely that a negative answer to Question \ref{question:final-q} can be provided as stated, so let us end with a falsifiable conjecture. The data we have strongly points an answer to Question \ref{question:final-q}, in tame level 1 for small primes, with a specific function $\widetilde m_p(h)$.

\begin{conjecture}\label{conj:final-conj}
For $3 \leq p\leq 11$ and $N=1$, Question \ref{question:final-q} has an affirmative answer witnessed by $\twid m_p(h) = \floor{\log_p(h)} + 1$ (when $h \geq 1$).
\end{conjecture}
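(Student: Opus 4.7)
The plan is to split the conjectural equality of multisets into a lower bound $\twid m_p(f) \leq \CSk(f)$ for each $p$-new eigenform $f \in S_k(\Gamma_0(p))$, together with an exact count $\#\{f : \CSk(f) = j\} = d_j(k) - d_{j-1}(k)$. The two ingredients combine to give the asserted agreement of multisets. The first is essentially already available from this paper; the second is a sharpness assertion that is the real obstacle.

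For the bound $\twid m_p(f) \leq \CSk(f)$, the $\mathscr L$-invariant inequality $\ceil{-v_p(\mathscr L_f)} \leq \CSk(f)$ is immediate from Theorem \ref{theorem:L-invariant-normalization}. For the logarithmic contribution I would invoke the optimized estimate \eqref{eqn:optimal-implication}. Every $p$-new eigenform in $S_k(\Gamma_0(p))$ has even weight, and in the narrow range $3 \leq p \leq 11$ with $N=1$ the residual representation is globally (hence locally) reducible (compatible with the parenthetical observation in Example \ref{example:single-L-invariant-example} for $p=5$, and verifiable by a finite check of mod-$p$ modular forms of low weight at level $1$ for each small $p$). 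We are thus in the most favorable case of Remark \ref{remark:reducibility-into-account}, yielding $\floor{\log_p(h-1)} \leq \CSk(f)$ conditionally on the strengthening \eqref{eqn:modified-blz} of Berger--Li--Zhu; the upgrade to $\floor{\log_p(h)} + 1$ comes from Remark \ref{rmk:exclusionary-proof}, since the proof of Theorem \ref{thm:slope-dependent} actually excludes the extremal weight rather than merely failing to include it. For $p = 3$, Proposition \ref{prop:counting-prop} must be redone, but in the even-weight reducible-residual case this is elementary, as observed in Section \ref{sec:optimization}.

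The main obstacle is the complementary sharpness statement: that $\CSk(f)$ actually equals $\twid m_p(f)$ and that $d_j(k)$ jumps by exactly one at each weight where a family of that width appears. I see two plausible attacks. The first is via the ghost conjecture of Bergdall--Pollack (\cite{BergdallPollack-GhostPaperShort}), which predicts the Newton polygon of the characteristic series of $U_p$ at every point of weight space combinatorially; from the ghost prediction one can read off the multiset $\{\CSk(f)\}$ directly. I would then verify by a direct combinatorial calculation that the ghost-predicted multiset agrees with $\{\twid m_p(f)\}$ for $p \in \{3,5,7,11\}$, reducing Conjecture \ref{conj:final-conj} to the ghost conjecture in these four cases. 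The second attack is finitary: Wan's theorem shows $d_j(k)$ stabilizes in $j$ at a rate bounded by $O(h^2)$, so for each fixed $p, k$ the conjecture reduces in principle to a bounded computation using Pollack's and Lauder's techniques, contingent on extending the $\mathscr L$-invariant computations out to weights on the order of $k + (p-1)p^{O(h^2)}$.

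The hard part, in either case, is the sharpness: every bound proved in this paper is a one-sided obstruction, and Conjecture \ref{conj:final-conj} asserts these are the \emph{only} obstructions. A combinatorial match against the ghost series would require understanding the ``separability'' of the $\mathscr L$-invariants and how they interact with the Newton polygon combinatorics, while an unconditional sharpness argument would require constructing constant-slope families of maximal width in the spirit of Emerton's equation-based analysis in Section \ref{section:emerton}, but across many slopes and weights simultaneously. I would pursue the ghost-conjectural route first, as it places Conjecture \ref{conj:final-conj} within a structural framework where the logarithmic growth of $\twid m_p(h)$ admits a natural combinatorial explanation.
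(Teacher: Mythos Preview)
The statement you are attempting to prove is labeled a \emph{Conjecture} in the paper, and the paper does not supply a proof. The surrounding text makes this explicit: the author writes that the conjecture is supported by numerical data (Tables~\ref{table:5adic-gouvea-mazur-example}--\ref{table:gouvea-mazur-quantities}) and that ``we have not found a theoretical explanation for why the `$h-1$' in \eqref{eqn:cong-obstruction} needs to be replaced by `$h$' in order to make Conjecture~\ref{conj:final-conj} true.'' There is therefore no paper proof to compare your proposal against.

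Your write-up is a research plan rather than a proof, and you correctly identify the genuine obstruction: every result in the paper is a one-sided inequality, while the conjecture asserts an exact equality of multisets. A few specific gaps are worth flagging. First, even your ``easy'' direction is conditional: the logarithmic bound you invoke from \eqref{eqn:optimal-implication} rests on the unproven strengthening \eqref{eqn:modified-blz} of Berger--Li--Zhu, and the paper itself notes that the shift from $h-1$ to $h$ in the definition of $\twid m_p$ is not explained by anything proved here. Second, your claim that the residual representations are globally reducible for all $p\le 11$ at level~$1$ needs care at $p=11$ (the form $\Delta$ enters), and in any case the paper only asserts this parenthetically for $p=5$. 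Third, reducing to the ghost conjecture does not yield a proof, since that conjecture is itself open; and the finitary route via Wan's bound, while logically sound for each fixed $k$, does not give a uniform argument over all $k$ and would require $\mathscr L$-invariant computations at weights of size roughly $p^{O(k^2)}$, which is not feasible in practice. In short, your outline is a reasonable map of the territory, but the conjecture remains open and the paper does not claim otherwise.
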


Admittedly, the scope of this conjecture is small. But, we have not found a theoretical explanation for why the ``$h-1$'' in \eqref{eqn:cong-obstruction} needs to be replaced by ``$h$'' in order to make Conjecture \ref{conj:final-conj} true. If we had one, or if we had a lot more data, we would surely attempt to present a more broad conjecture. Recent developments (\cite{Graef-Linvariant,ABGT-Linvariants}) should make it possible for interested researchers to gather more data, including in some $p$-old cases.

\bibliography{const_slope_bib}
\bibliographystyle{abbrv}

\end{document}